\documentclass[12pt,reqno]{amsart}
\usepackage{fullpage}
\usepackage{times}
\usepackage{colonequals}
\usepackage{amsmath,amssymb,amsthm,url}
\usepackage[utf8]{inputenc}
\usepackage[english]{babel}
\usepackage{comment}
\usepackage{bbm}
\usepackage{enumerate}
\usepackage{bm}
\usepackage{graphicx}
\usepackage{mathrsfs}
\usepackage[colorlinks=true, pdfstartview=FitH, linkcolor=blue, citecolor=blue, urlcolor=blue]{hyperref}

\newtheorem{thm}{Theorem}[section]
\newtheorem{lem}[thm]{Lemma}

\newtheorem{cor}[thm]{Corollary}

\newtheorem{claim}[thm]{Claim}
\newenvironment{poc}{\begin{proof}[Proof of claim]}{\end{proof}}

\theoremstyle{definition}

\newtheorem{ex}[thm]{Example}

\newcommand{\F}{\mathbb{F}}

\title{Extensions of the Carlitz--McConnel and Blokhuis--Sziklai theorems for unions of cyclotomic classes}
\author{Maosheng Xiong}
\address{Department of Mathematics\\ The Hong Kong University of Science and Technology\\
Hong Kong\\ P. R. China}
\email{mamsxiong@ust.hk}
\author{Chi Hoi Yip}
\address{School of Mathematics\\ Georgia Institute of Technology\\ GA 30332\\ United States}
\email{cyip30@gatech.edu}
\keywords{finite field, permutation, subfield, cyclotomy}
\subjclass[2020]{11B30, 11T30, 05C25}

\begin{document}
\begin{abstract}
Let \(p\) be a prime, let \(q=p^n\), and let \(D\subseteq \mathbb{F}_q^\ast\). A celebrated result of Carlitz and McConnel states that if \(D\) is a proper subgroup of \(\mathbb{F}_q^\ast\), and \(f:\mathbb{F}_q\to\mathbb{F}_q\) is a function such that
$(f(x)-f(y))/(x-y)\in D$ for all $x\neq y$,
then \(f\) must be of the form \(f(x)=ax^{p^j}+b\). In this paper, we extend their result to the setting where \(D\) is a union of cosets of a fixed subgroup of \(\mathbb{F}_q^\ast\), under a mild assumption. In a similar spirit, we also investigate maximum cliques in related Cayley graphs over finite fields, strengthening several results of Blokhuis, Sziklai, and Asgarli and Yip.
\end{abstract}
\maketitle

\section{Introduction}
Throughout this paper, let \(p\) be a prime and \(n\) a positive integer. Let \(q=p^n\), and let \(\F_q\) denote the finite field with \(q\) elements. We write \(\F_q^\ast=\F_q\setminus\{0\}\).

A celebrated result of Carlitz and McConnel \cite{C60, M62} states the following.

\begin{thm}[\cite{C60,M62}]\label{thm:CM}
Let \(p\) be a prime and \(n\) a positive integer, and let \(q=p^n\). If \(D\) is a proper subgroup of \(\F_q^\ast\), and \(f:\F_q\to\F_q\) is a function such that
\begin{equation}\label{eq:f}
\frac{f(x)-f(y)}{x-y}\in D
\end{equation}
whenever \(x,y\in\F_q\) with \(x\neq y\), then there exist \(a,b\in\F_q\) and an integer \(0\le j\le n-1\) such that
\[
f(x)=a x^{p^j}+b \qquad \text{for all } x\in\F_q.
\]
\end{thm}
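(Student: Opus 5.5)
We normalize first. Replacing $f$ by $f(x)-f(0)$ changes nothing, so we may assume $f(0)=0$. Since $D$ is a proper subgroup of $\F_q^\ast$, it lies in the subgroup of $m$-th powers for some $m\mid q-1$ with $m>1$. Hence, if $D$ has order $d$, then $d\le (q-1)/2$ and $z\in D \iff z^{d}=1$. The proof has two stages: first show that $f$ is additive, i.e.\ an $\F_p$-linearized polynomial; then show that a linearized polynomial satisfying the hypothesis is a monomial.

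\emph{Stage 1 (additivity via directions).} Let $U=\{(x,f(x)):x\in\F_q\}\subseteq \mathrm{AG}(2,q)$. By hypothesis the set $N$ of directions determined by $U$ is contained in $D$, so $|N|\le (q-1)/2<(q+3)/2$. We invoke the Rédei / Ball--Blokhuis--Brouwer--Storme--Sz\H{o}nyi classification of functions determining few directions. For $|N|<(q+3)/2$ it gives a subfield $\F_s\subseteq \F_q$ with $f$ $\F_s$-linear (after our normalization $f(0)=0$), and in particular $\F_p$-linear, i.e.\ $f(x+y)=f(x)+f(y)$. Concretely, $f$ is then given by a linearized polynomial
\[
f(x)=\sum_{i=0}^{n-1} a_i x^{p^i}.
\]
If one prefers to avoid citing that theorem, the fallback is the classical Rédei-polynomial argument: consider $\prod_x (t+ x u - f(x))$ and use lacunarity, which follows from $|N|\le (q-1)/2$, to force additivity. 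I expect this stage to be the main obstacle, since it is where the full strength of the direction theory is needed.

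\emph{Stage 2 (linearized $\Rightarrow$ monomial).} Taking $y=0$ in \eqref{eq:f} gives $f(x)^{d}=x^{d}$ for all $x\in\F_q$. Since $d<q-1$, both sides are reduced in the sense that every exponent that occurs is at most $d(p^{n-1})\cdot$(digit sum) and can be reduced modulo $x^q-x$ by reducing exponents modulo $q-1$ (with $0$ kept separate). Expand
\[
\Big(\sum_i a_i x^{p^i}\Big)^{d}=\sum \binom{d}{k_0,\dots,k_{n-1}}\prod_i a_i^{k_i}\,x^{\sum_i k_i p^i}.
\]
By Lucas' theorem the multinomial coefficient is nonzero mod $p$ only when the $k_i$ add without carries in base $p$. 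So the surviving terms correspond to splittings of the base-$p$ digits of $d$ among the indices $i$, and the exponent $\sum_i k_ip^i$ is a ``digit-rotated'' version of $d$ modulo $q-1$. We compare with the single monomial $x^{d}$ modulo $x^q-x$. Taking the splitting that assigns all of $d$ to a single index $i$ and looking at the coefficient of $x^{dp^i \bmod (q-1)}$, the identity forces a relation among the $a_i^{d}$. Looking at mixed splittings, for example moving one unit of a nonzero digit of $d$ from index $i$ to index $j$, produces an exponent that is not congruent to any pure rotation of $d$, unless $d\equiv dp^{k}$ for the relevant shifts. This forces $a_ia_j=0$ for $i\ne j$.

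The delicate case is when $d$ is fixed by some Frobenius rotation, i.e.\ $d(p^k-1)\equiv 0 \pmod{q-1}$. This happens for instance when $D\subseteq\F_{p^k}^\ast$-type subgroups. In that case we would handle the collisions by a separate count, using that $d<q-1$ guarantees at least one nonzero base-$p$ digit of $d$ that is strictly less than $p-1$, or that some digit position of $d$ is $0$. Such a digit or position lets us construct a mixed exponent that cannot coincide with a rotation of $d$.

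Once at most one $a_j$ is nonzero, we get $f(x)=a_jx^{p^j}$, and undoing the normalization gives $f(x)=ax^{p^j}+b$. I expect the bookkeeping in this digit argument to be routine once the carry-free structure is set up.
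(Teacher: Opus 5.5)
Your Stage~1 is fine: since $|\mathcal{D}_f|\le |D|\le (q-1)/2$, Theorem~\ref{thm:directions} makes $f$ additive, and the hypothesis then becomes $f(z)^d=z^d$ for all $z$. But you have put the difficulty in the wrong place. As the paper notes, Theorem~\ref{thm:directions} does not by itself give Theorem~\ref{thm:CM} when $n\ge 2$. Everything nontrivial is in Stage~2, and that is where the gap is.

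Your key step assumes the following. Move one unit of digit $t$ from index $i$ to index $j$. If the resulting exponent is not a rotation of $d$, you assume its coefficient is the single term $d_t\,a_i^{d-p^t}a_j^{p^t}$, so its vanishing gives $a_ia_j=0$. This is false. After reducing exponents modulo $q-1$, many different carry-free splittings land on the same exponent. Excluding pure rotations rules out only one kind of collision.

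For example, take $n=3$, $p\ge 3$ and $D=\{x^{p-1}:x\in\F_q^\ast\}$, so $d=1+p+p^2$ and $f^d=f\cdot f^p\cdot f^{p^2}$. Moving the unit of digit $2$ from index $0$ to index $1$ gives exponent $1+p+p^3\equiv 2+p$. This is not a rotation of $d$, since every rotation of $d$ equals $d$. Yet the coefficient of $x^{2+p}$ in $f(x)^d \bmod (x^q-x)$ is
\[
a_0^{1+p}a_1^{p^2}+a_0a_2^{p+p^2}+a_1^{1+p^2}a_2^{p},
\]
which is a three-term relation, not $a_0a_1=0$. In general you get a large system of multi-term identities in $a_0,\dots,a_{n-1}$, with further collisions caused by carries. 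Showing that this system forces a single nonzero $a_j$ is the entire content of the theorem, not routine bookkeeping. Your ``delicate case'' paragraph only asserts that a suitable isolated exponent exists.

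The paper does not use coefficients at all. It obtains Theorem~\ref{thm:CM} from Theorem~\ref{thm:Y}, using $DD^{-1}D^{-1}=D$ and $|D|\le (q-1)/2$. If you want to keep your Stage~1, one way to replace Stage~2 is to apply Theorem~\ref{thm:directions} a second time.

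Normalize $f(1)=1$; this is harmless because $f(1)\in D$ and $D$ is a group. Set $G(x)=1/f(1/x)$ and $G(0)=0$; this is well defined because $f(z)\neq 0$ for $z\neq 0$. For distinct nonzero $x,y$, put $w=1/y-1/x$. Additivity of $f$ gives
\[
\frac{G(x)-G(y)}{x-y}=\frac{f(w)}{w}\cdot\frac{1/x}{f(1/x)}\cdot\frac{1/y}{f(1/y)}\in DD^{-1}D^{-1}=D .
\]
Also $G(x)/x=\frac{1/x}{f(1/x)}\in D$. So $G$ is additive too, by Theorem~\ref{thm:directions}.

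Now apply $f$ to Hua's identity $x-\bigl(x^{-1}+(y^{-1}-x)^{-1}\bigr)^{-1}=x^2y$ (valid for $x,y\ne 0$, $xy\ne 1$). Using $f(z^{-1})=G(z)^{-1}$ and $G(z^{-1})=f(z)^{-1}$, this yields $f(x^2y)=f(x)^2G(y)$. Taking $x=1$ gives $f=G$, and taking $y=1$ gives $f(x^2)=f(x)^2$. Hence $f(x^2y)=f(x^2)f(y)$. Every element of $\F_q$ is a sum of at most two squares, so $f$ is multiplicative. Therefore $f$ is a field automorphism, i.e.\ $f(x)=x^{p^j}$.
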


Theorem~\ref{thm:CM} was first proved by Carlitz \cite{C60} in the case where \(q\) is odd and \(D\) is the subgroup of index \(2\), and was later extended by McConnel \cite{M62} to all proper subgroups. The Carlitz--McConnel theorem has various connections with finite geometry, graph theory, and group theory; we refer to the survey of Jones \cite[Section~9]{J20}. In particular, it has many applications in finite geometry; see, for example, \cite{BSW92,KS10}. We also refer to \cite{BL73,G81,L90} for variants of the Carlitz--McConnel theorem proved using group-theoretic tools.

Theorem~\ref{thm:CM} states that the condition~\eqref{eq:f} forces $f$ to be an automorphism of the field, under some normalization. Here we intentionally state Theorem~\ref{thm:CM} in a form different from the original formulations of Carlitz \cite{C60} and McConnel \cite{M62}, who expressed the result in terms of multiplicative characters. The main reason is that our formulation connects more naturally with the theory of directions in finite geometry, which we briefly review next.

Let \(AG(2,q)\) denote the \emph{affine Galois plane} over \(\F_q\). Let \(U\subseteq AG(2,q)\) be a set of points, and write
$U=\{(x_i,y_i):1\le i\le |U|\}.$ The set of \emph{directions determined by} \(U\) is
\[ \mathcal{D}_U=\left\{ \frac{y_j-y_i}{x_j-x_i} \colon 1\leq i <j \leq |U| \right \} \subseteq \F_q \cup \{\infty\},\]
where \(\infty\) denotes the vertical direction. If \(f:\F_q\to\F_q\) is a function, we may consider its graph
$U(f)=\{(x,f(x)):x\in\F_q\},$
and define the set of \emph{directions determined by \(f\)} by \(\mathcal D_f:=\mathcal D_{U(f)}\). Since \(U(f)\) is a graph, we have \(\infty\notin \mathcal D_f\), and \(\mathcal D_f\) is precisely the set of slopes of secant lines through pairs of points on \(U(f)\). In this language, condition~\eqref{eq:f} is equivalent to \(\mathcal D_f\subseteq D\).

The following well-known result is due to Blokhuis, Ball, Brouwer, Storme, and Sz{\H{o}}nyi \cite{B03,BBBSS99}.

\begin{thm}[\cite{B03, BBBSS99}]\label{thm:directions}
Let $p$ be a prime and let $q=p^n$. Let $f:\F_q \to \F_q$ be a function such that $f(0)=0$. If $|\mathcal{D}_f|\leq \frac{q+1}{2}$, then $f$ is additive, that is, $f(x+y)=f(x)+f(y)$ for all $x,y\in \F_q$.
\end{thm}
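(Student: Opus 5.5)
We follow the polynomial (Rédei) method of Blokhuis, Ball, Brouwer, Storme and Sz\H{o}nyi. Identify \(f\) with its interpolating polynomial of degree at most \(q-1\), and consider the Rédei polynomial
\[
R(X,Y)=\prod_{x\in\F_q}\bigl(X+xY-f(x)\bigr)=\sum_{j=0}^{q}h_j(Y)X^{q-j},
\]
where \(\deg h_j\le j\). For \(y\in\F_q\), the value \(-y\) is a non-direction (i.e.\ \(y\notin -\mathcal D_f\), up to sign convention) exactly when the map \(x\mapsto f(x)-xy\) is a bijection. In that case \(R(X,y)=X^q-X\), so \(h_j(y)=0\) for \(1\le j\le q-2\). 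Also, \(h_j\) has degree at most \(j\) in \(Y\). Hence if the number of non-directions, which is \(q-|\mathcal D_f|\ge \frac{q-1}{2}\), exceeds \(j\), then \(h_j\equiv 0\). This already kills \(h_j\) for all \(j<\frac{q-1}{2}\).

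Next we extract the structure. Write \(R(X,Y)=X^q+\sum_j h_j(Y)X^{q-j}\) and define the exponent \(s=p^e\) as the largest power of \(p\) such that \(R\) is a polynomial in \(X^s\) modulo lower-order effects. Following \cite{BBBSS99}, define \(s\) via \(f(x)\equiv\) a polynomial in \(x^s\), more precisely as the largest \(s\) with \(f\circ\) translations compatible. The key step is the \emph{lacunary polynomial} argument. Fix a non-direction \(y_0\) and set \(g(x)=f(x)-xy_0\). For each \(y\) one considers \(\sum_x (X+xY-f(x))^{k}\), i.e.\ the power sums \(\sum_x (f(x)-xy)^k\). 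These vanish for \(k<q-1\) exactly when \(y\) is a non-direction. Since each is a polynomial in \(y\) of degree at most \(k\), vanishing at more than \(k\) points forces the identity \(\sum_x (f(x)-xY)^k=0\) identically for all \(k<\frac{q+1}{2}\) or so.

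Extracting coefficients of \(Y^{i}\) yields
\[
\sum_x x^{i}f(x)^{k-i}=0 \quad\text{whenever } \binom{k}{i}\ne 0 \text{ in } \F_p,\ k\le \tfrac{q-1}{2}.
\]
From these moment identities we aim to show that \(f(x+a)-f(x)-f(a)\) vanishes. Consider \(F(X,Y)=f(X+Y)-f(X)-f(Y)\) and show that its power sums against monomials vanish enough to force \(F\equiv 0\) on \(\F_q^2\). Equivalently, we show that the interpolating polynomial of \(f\) has only monomials \(x^{p^t}\). Concretely, write \(f(x)=\sum_m c_mx^m\). We use the identities with \(k-i=1\), namely \(\sum_x x^i f(x)=0\) for \(i\le\frac{q-3}{2}\) with \(\binom{i+1}{i}=i+1\ne0\). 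These give \(c_m=0\) for \(m\ge \frac{q+1}{2}\), since \(\sum_x x^{i+m}=-1\) iff \(i+m=q-1\). This yields \(\deg f\le \frac{q-1}{2}\) (the case with \(p\mid i+1\) needs the \(k-i=p^t\) variants). Then we iterate with higher \(k-i\): the identities with \(k-i=2,3,\dots\) constrain products of coefficients, and a Lucas-theorem bookkeeping shows that any monomial \(x^m\) with \(m\) not a power of \(p\) (and \(m\ge 2\)) leads, via the smallest such \(m\) and the choice \(k-i=\) its base-\(p\) digit split, to a nonzero sum. Since \(f(0)=0\) removes the constant term, \(f\) is then a sum of monomials \(x^{p^t}\), hence additive.

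The main obstacle is this last step: converting the vanishing moments into the lacunary shape of \(f\) when \(|\mathcal D_f|\) is as large as \(\frac{q+1}{2}\), where the degree bounds are sharp. Here the digit carries of Lucas's theorem interact delicately. I would first try the cleaner route of \cite{BBBSS99}: show that \(R(X,Y)\) factors so that \(X^q-X\) divides suitable derivatives, then derive \(f(x)\in \F_q[x^{s}]\) with \(s\) chosen maximal, and show that \(s=q\) is impossible unless \(f\) is linear over \(\F_s\). From that we get that \(f\) is \(\F_s\)-linear, and in particular additive.
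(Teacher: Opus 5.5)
The paper does not prove this statement; it quotes it from \cite{BBBSS99,B03}. Your first step is correct and is how the standard proof starts: with the R\'edei polynomial you get $h_j\equiv 0$ for $1\le j<\frac{q-1}{2}$. After that there is a genuine gap. The key passage, from the moment identities to ``$f$ has only monomials $x^{p^t}$'' via ``Lucas-theorem bookkeeping'', is asserted rather than argued, and it cannot be carried out.

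Replacing the coefficients $h_j$ by the power sums $\sum_x (f(x)-xY)^k$ throws away exactly the information the theorem depends on. In characteristic $p$, power sums cannot see multiplicities divisible by $p$, because Newton's identities do not recover $h_j$ when $p\mid j$.

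Here is a concrete instance for $p=2$. Let $q=2^n$ with $n\ge 3$ odd, and let $f(x)=x^6$ (Segre's o-polynomial).
\begin{itemize}
\item $f$ is a permutation, and every line of slope $y\neq 0$ meets its graph in $0$ or $2$ points.
\item Hence $\sum_x (f(x)-xy)^k=0$ for every $y\in\F_q$ and every $0\le k\le q-2$, so $\sum_x (f(x)-xY)^k\equiv 0$ as a polynomial in $Y$.
\item Therefore every identity $\binom{k}{i}\sum_x x^i f(x)^{k-i}=0$ you write down holds, in a range larger than you claim.
\item Yet $x^6$ is not additive.
\end{itemize}
This $f$ determines $q-1$ directions, so it does not contradict the theorem. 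What it shows is that your argument uses the hypothesis $|\mathcal{D}_f|\le\frac{q+1}{2}$ only through identities that do not imply the conclusion. Separately, your degree step has its own problems:
\begin{itemize}
\item it is off by one, since the identities are only guaranteed for $k\le\frac{q-3}{2}$;
\item it leaves the case $p\mid m$ open;
\item a bound $\deg f\le\frac{q-1}{2}$ would in any case be far from additivity.
\end{itemize}

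What is missing is the actual mechanism of \cite{BBBSS99,B03}.
\begin{itemize}
\item Keep working with $R(X,y)$ at the directions $y$. There it factors as $\prod_c (X-c)^{m_c(y)}$, where $m_c(y)$ is the number of points of the graph on the line of slope $y$ and intercept $c$.
\item Let $s$ be the largest power of $p$ dividing all these multiplicities.
\item A degree/multiplicity (lacunary polynomial) argument, combining this full factorization with the vanishing of the $h_j$, gives one of three cases: $s=1$ and $|\mathcal{D}_f|\ge\frac{q+3}{2}$; or $\frac{q}{s}+1\le|\mathcal{D}_f|\le\frac{q-1}{s-1}$; or $s=q$ and $|\mathcal{D}_f|=1$.
\item Under $|\mathcal{D}_f|\le\frac{q+1}{2}$ this forces $s\ge 3$, and in fact $s\ge 4$ when $p=2$.
\item A further nontrivial argument shows the graph is $\F_s$-linear when $s>2$.
\end{itemize}
The $x^6$ example shows the restriction $s>2$ cannot be dropped, and a moment-only argument cannot detect it.

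Your closing paragraph points toward this route but misstates it. The exponent $s$ is defined through line multiplicities, not as the largest $s$ with $f\in\F_q[x^s]$; for $f(x)=x$ the latter gives $s=1$, whereas the correct $s$ is $q$. Likewise, the target conclusion is $\F_s$-linearity of $f$, not membership of $f$ in $\F_q[x^s]$.
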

In particular, when \(q=p\) is prime, Theorem~\ref{thm:directions} implies Theorem~\ref{thm:CM} immediately; this was first observed by Lov\'asz and Schrijver \cite{LS83}. In the general prime-power case, however, Theorem~\ref{thm:directions} does not directly imply Theorem~\ref{thm:CM}. Nevertheless, further connections between the theory of directions and the Carlitz--McConnel theorem have recently been observed by Muzychuk \cite{M20}, Aguglia--Csajb\'ok--Weiner \cite{ACW24}, the second author \cite{Y25}, and Csajb\'ok \cite{B25}. In particular, the second author \cite{Y25} proved the following strengthening of Theorem~\ref{thm:CM}.

\begin{thm}[\cite{Y25}]\label{thm:Y}
Let \(p\) be a prime, let \(q=p^n\), and let \(D\subseteq \F_q^\ast\). Suppose \(f:\F_q\to\F_q\) is a function such that \(\mathcal D_f\subseteq D\). If
\[
|DD^{-1}D^{-1}|\le \frac{q+1}{2},
\]
then there exist \(a,b\in\F_q\) and an integer \(0\le j\le n-1\) such that
\[
f(x)=a x^{p^j}+b \qquad \text{for all } x\in\F_q.
\]
\end{thm}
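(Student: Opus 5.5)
The plan is to reduce Theorem~\ref{thm:Y} to Theorem~\ref{thm:directions}, applied not to $f$ itself but to an auxiliary function built from $f$ by composition and scaling, whose direction set lies inside $DD^{-1}D^{-1}$.

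\textbf{Normalization.} Replacing $f$ by $f-f(0)$, we may assume $f(0)=0$. Then $\mathcal D_f\subseteq D\subseteq\F_q^\ast$, so $f$ is injective, hence a permutation of $\F_q$ fixing $0$.

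\textbf{Step 1: $f$ is additive.} Fix $c\in\F_q^\ast$ and consider $g_c(x)=f^{-1}(c f(x))$. For $x\neq y$,
\[
\frac{g_c(x)-g_c(y)}{x-y}
=\frac{f^{-1}(cf(x))-f^{-1}(cf(y))}{cf(x)-cf(y)}\cdot c\cdot\frac{f(x)-f(y)}{x-y}.
\]
The first factor lies in $D^{-1}$ and the last in $D$. To land in $DD^{-1}D^{-1}$, I would choose $c\in D^{-1}$, or more flexibly $c\in D^{-1}D^{-1}$ if that is needed. Then $\mathcal D_{g_c}\subseteq DD^{-1}D^{-1}$, which has size at most $(q+1)/2$. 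Since $g_c(0)=0$, Theorem~\ref{thm:directions} shows that $g_c$ is additive.

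The cleaner route is to take $c=1/d$ with $d=f(1)\in D$. Consider
\[
h(x)=f^{-1}\bigl(f(x)\cdot (\text{suitable scalar})\bigr)
\quad\text{or}\quad
h(x)=d^{-1}f(x).
\]
For $h(x)=d^{-1}f(x)$, the direction set is $d^{-1}D\subseteq D^{-1}D$. This is contained in $DD^{-1}D^{-1}$ provided $1\in D^{-1}$ after rescaling. So I would first replace $f$ by $d^{-1}f$ and $D$ by $d^{-1}D$. This is harmless: $|DD^{-1}D^{-1}|$ scales by the factor $d^{-1}d\,d=d$, so its size is unchanged. After this, $1\in D$ and $D\subseteq DD^{-1}D^{-1}$, and Theorem~\ref{thm:directions} gives that $f$ itself is additive.

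\textbf{Step 2: multiplicativity.} For each $c\in D^{-1}$, the map $g_c(x)=f^{-1}(cf(x))$ has $\mathcal D_{g_c}\subseteq D^{-1}\cdot c\cdot D\subseteq DD^{-1}D^{-1}$, so $g_c$ is additive. It is also $\F_p$-linear, and the key point is to show that $g_c$ is multiplication by a scalar.

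For that, I would use a second family: $k_u(x)=f(ux)$ compared with $f(x)$. The expression $f(ux)/f(x)$, or rather the ratio $\frac{f(ux)-f(uy)}{f(x)-f(y)}$, lies in $D\cdot u\cdot D^{-1}$. So consider $F_u(x)=f(u f^{-1}(x))$, whose direction set lies in $D u D^{-1}$. For $u\in D^{-1}$ this is inside $DD^{-1}D^{-1}$, so $F_u$ is additive for every $u\in D^{-1}$.

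\textbf{Step 3: conclude.} Since $f$ is additive and $f\circ u\circ f^{-1}$ is additive for all $u$ in $D^{-1}$, the same holds for $u$ in the subfield $K$ generated by $D^{-1}$. I would then argue that $K=\F_q$: otherwise $D$ lies inside a proper subfield $K$, and one applies the argument inside the $K$-vector-space structure. In the case $K=\F_q$, the map $u\mapsto f u f^{-1}$ is an injective ring homomorphism $\F_q\to\mathrm{End}_{\F_p}(\F_q)$ whose image consists of a field of multiplication maps twisted by $f$. A standard argument then gives $f(ux)=\sigma(u)f(x)$ with $\sigma$ a field automorphism, so $f(x)=f(1)x^{p^j}$.

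\textbf{Expected obstacle.} The main difficulty is Step 3: passing from additivity of the conjugates $fuf^{-1}$ for $u\in D^{-1}$ to semilinearity of $f$. In particular, handling the case where $D$ lies in a proper subfield requires care. There, $D\subseteq K^\ast$ and $\mathcal D_f\subseteq K$ forces $f$ to be $K$-semilinear-like, and the bound $|DD^{-1}D^{-1}|\le|K|\le\sqrt q$ must be used.
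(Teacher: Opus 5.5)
Your Step 1 is fine: for any $e\in D$ one has $|\mathcal D_f|\le|D|=|e^{-2}D|\le|DD^{-1}D^{-1}|$, so $f-f(0)$ is additive.

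\textbf{The gap.} Steps 2--3 extract nothing further from the hypothesis. Once $f$ is additive it is an $\F_p$-linear bijection. So each auxiliary map you build, namely $cf$, $f^{-1}\circ m_c\circ f$ and $f\circ m_u\circ f^{-1}$ with $m_c(x)=cx$, is a composition of additive maps. It is therefore additive for \emph{every} $c,u\in\F_q^\ast$, whatever $D$ is. Applying Theorem~\ref{thm:directions} to these maps is vacuous, and so is the passage to the subfield generated by $D^{-1}$.

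The ``standard argument'' in Step 3 does not exist. For any $\F_p$-linear bijection $f$, the map $u\mapsto f m_u f^{-1}$ is an injective ring homomorphism $\F_q\to\mathrm{End}_{\F_p}(\F_q)$. The claim that its image consists of multiplication operators, i.e.\ $f(ux)=\sigma(u)f(x)$, is exactly the semilinearity you need to prove. Additivity alone cannot suffice: the map $f(x)=x+ux^5$ on $\F_{25}$ from the example in the introduction is additive with $|\mathcal D_f|=6$, yet it is not of the form $ax^{p^j}+b$.

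\textbf{The missing ingredient.} The paper quotes this theorem from \cite{Y25} without proof, but the shape $DD^{-1}D^{-1}$ of the hypothesis points to what is needed. You should apply Theorem~\ref{thm:directions} to a function whose additivity is not automatic, namely the inversion conjugate $g(x)=1/f(1/x)$ with $g(0)=0$. Suppose $f(0)=0$ and $f$ is additive. For distinct nonzero $x,y$, put $w=\frac1y-\frac1x=\frac{x-y}{xy}$. Then
\[
\frac{g(x)-g(y)}{x-y}=\frac{f(1/y)-f(1/x)}{(x-y)f(1/x)f(1/y)}=\frac{f(w)}{w}\cdot\frac{1/x}{f(1/x)}\cdot\frac{1/y}{f(1/y)}\in DD^{-1}D^{-1},
\]
while $\frac{g(x)-g(0)}{x}=\frac{1/x}{f(1/x)}\in D^{-1}\subseteq DD^{-1}D^{-1}$. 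Hence $g$ is additive.

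Now normalize $f(1)=1$. The relation $g(x+1)=g(x)+1$, rewritten using additivity of $f$ and $\frac{X}{1+X}=1-\frac{1}{1+X}$, becomes $f(1/Z)=1/f(Z)$ for all $Z\neq 0$. An additive map with $f(1)=1$ that commutes with inversion is a field automorphism by Hua's theorem. Concretely, write $f(x)=\sum_i a_i x^{p^i}$ and compare monomials in $f(x)f(x^{-1})=1$ on $\F_q^\ast$. The exponents $p^i-p^k$ with $i\neq k$ are distinct and nonzero modulo $q-1$, so exactly one $a_i$ is nonzero, giving $f(x)=x^{p^j}$.
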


Note that when \(D\) is a proper subgroup of \(\F_q^\ast\), we have \(DD^{-1}D^{-1}=D\) and \(|D|\le (q-1)/2\), so Theorem~\ref{thm:Y} recovers Theorem~\ref{thm:CM}.

\subsection{An extension of the Carlitz-McConnel theorem}

In this paper, we aim to characterize functions \(f:\F_q\to\F_q\) with \(\mathcal D_f\subseteq D\), where \(D\) is a union of cosets of a fixed subgroup of \(\F_q^\ast\). This study is motivated by two main considerations.

Our first motivation comes from arithmetic combinatorics. It is remarked in \cite[Theorem~1.2]{Y25} that Theorem~\ref{thm:Y} applies in particular when \(|D|\) is small and \(D\) has small multiplicative doubling (that is, \(|DD|/|D|\) is small). Heuristically, this means that \(D\) should resemble a multiplicative subgroup of \(\F_q^\ast\), whose doubling constant is \(1\). More generally, by the Freiman theorem for abelian groups due to Green and Ruzsa \cite{GR07}, sets of small doubling admit strong structural control in terms of coset progressions. This makes it natural to study the case where \(D\) is a union of cosets of a fixed subgroup $H$ of $\F_q^\ast$.

Thus, it is natural to extend Theorem~\ref{thm:Y} to the setting in which \(|D|\le (q+1)/2\) and \(D\) is a union of cosets of a fixed subgroup of \(\F_q^\ast\). We establish this in Theorem~\ref{thm:main}, and deduce a convenient consequence in Corollary~\ref{cor:main}, under a mild additional assumption. In this more general setting, it is easy to construct examples for which \(DD^{-1}D^{-1}=\F_q^\ast\) (so Theorem~\ref{thm:Y} does not apply). For example, let \(H\) be a subgroup of \(\F_q^\ast\) of index \(7\), let \(g\) be a primitive root of \(\F_q^\ast\), and set
\[
D=H\cup gH\cup g^2H.
\]

Our second main motivation comes from algebraic graph theory. Although the original proofs of Theorem~\ref{thm:CM} by Carlitz and McConnel \cite{C60,M62} do not involve graph theory, interestingly, Carlitz's theorem \cite{C60} is perhaps more famously known as the characterization of the automorphism group of Paley graphs; see \cite[Section 9]{J20}. Recall that if \(q\equiv 1\pmod 4\), the \emph{Paley graph of order \(q\)}, denoted \(P_q\), is the graph with vertex set \(\F_q\), in which two vertices are adjacent if their difference is a nonzero square in \(\F_q\). Equivalently, if \(\chi\) is the quadratic character of \(\F_q\), then a permutation \(f:\F_q\to\F_q\) is an automorphism of \(P_q\) if and only if
\[
\chi(f(x)-f(y))=\chi(x-y)\qquad \text{for all }x\neq y.
\]
This is equivalent to condition~\eqref{eq:f} with
$D=\{x^2:x\in\F_q^\ast\}.$ Thus, Carlitz's theorem \cite{C60} characterizes the automorphism group of Paley graphs. More generally, Theorem~\ref{thm:CM} characterizes automorphism groups of cyclotomic schemes \cite{M20}. Indeed, if $H$ is a subgroup of $\F_q^*$, then an automorphism \(f:\F_q\to\F_q\) of the cyclotomic scheme corresponding to \(H\) is precisely a permutation such that whenever \(x\neq y\), \(f(x)-f(y)\) and \(x-y\) always lie in the same coset of \(H\). We remark that the corresponding question over generalized Paley graphs, namely characterizing the automorphism group of a generalized Paley graph remains widely open; some partial progress has been made in \cite[Theorem 1.2]{LP09}.

In the more general setting where \(D\) is a union of cosets of a fixed subgroup of \(\F_q^\ast\), classifying functions \(f:\F_q\to\F_q\) satisfying~\eqref{eq:f} is equivalent to characterizing automorphisms of certain fusion schemes of cyclotomic schemes (see \cite{FM13, FX12, X12} for many constructions of such schemes). For example, we believe the following question is of independent interest: Let $H$ be a subgroup of $\F_q^*$ of index $d\geq 4$ and let $g$ be a primitive root of $\F_q$. Classify permutations $f:\F_q\to \F_q$ such that for each $0\leq i\leq d-1$,
\[x-y\in g^i H \implies f(x)-f(y)\in g^{i-1}H \cup g^i H \cup g^{i+1}H.\] 
Equivalently, this asks for a characterization of the permutations $f$ satisfying~\eqref{eq:f} with \[D=g^{-1}H \cup H \cup gH.\] 
We address this question in our new results. 

The following is our main result, which essentially addresses the above two motivations, under some mild assumption. 

\begin{thm}\label{thm:main}
Let $n,d,r$ be integers with $n,d\geq 2$ and $1\leq r\leq d-1$. Let $q=p^n$, where $p$ is a prime such that $p^n \equiv 1 \pmod d$ and \[p\geq \frac{(2n-1)^2 rd^2}{(d-r)^2}.\] 
Let $H$ be the subgroup of $\F_q^*$ with index $d$ and let $D$ be a union of $r$ cosets of $H$. If $f:\F_q \to \F_q$ is a function such that $\mathcal{D}_f \subseteq D$ and $|\mathcal{D}_f|\leq \frac{q+1}{2}$, then there are $a,b \in \F_q$ and an integer $0 \leq j \leq n-1$, such that 
\[
f(x)=a x^{p^j}+b \qquad \text{for all } x\in\F_q.
\]
\end{thm}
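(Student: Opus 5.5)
By translating we may assume $f(0)=0$; replacing $f$ by $f-f(0)$ does not change $\mathcal D_f$. Since $|\mathcal D_f|\le \frac{q+1}{2}$, Theorem~\ref{thm:directions} shows that $f$ is additive, hence $\F_p$-linear, so $f(x)=\sum_{i=0}^{n-1}a_i x^{p^i}$. The hypothesis $\mathcal D_f\subseteq D$ then becomes $f(z)/z\in D$ for all $z\ne 0$, and more usefully
\[
\frac{f(x)+tf(y)}{x+ty}\in D\qquad\text{for all } t\in\F_p,
\]
whenever $x,y\in\F_q^\ast$ are $\F_p$-linearly independent. The aim is to show that, unless $f$ is a monomial $ax^{p^j}$, some such pair $(x,y)$ gives a restricted sum over $\F_p$ that violates equidistribution among the cosets of $H$.

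\textbf{Counting step.} Fix such a pair $(x,y)$ and write $R(t)=(f(x)+tf(y))/(x+ty)$. Detect membership in $D=\bigcup_{c\in C}cH$, where $|C|=r$, by orthogonality of the multiplicative characters $\chi$ of order dividing $d$:
\[
p=\sum_{t\in\F_p}1_D(R(t))=\frac{r}{d}\,p+\frac1d\sum_{\chi\ne 1}\Big(\sum_{c\in C}\overline{\chi}(c)\Big)S_\chi,\qquad S_\chi=\sum_{t\in\F_p}\chi(R(t)).
\]
To bound $S_\chi$ I would use Weil descent, or the Katz-type bound for $\chi(g(t))$ with $t$ restricted to the prime field. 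The product of the $n$ Galois conjugates of $R$ is a rational function over $\F_p$ with at most $2n$ zeros and poles, which should give
\[
|S_\chi|\le (2n-1)\sqrt p
\]
whenever $\chi\circ R$ is not degenerate. To reach the factor $r$ rather than $r^2$ in the hypothesis, I would not use the trivial bound on $\sum_c\overline\chi(c)$. Instead apply Cauchy--Schwarz over $\chi$, using Parseval: $\sum_\chi\big|\sum_{c\in C}\chi(c)\big|^2=dr$. This yields
\[
p\,\frac{d-r}{d}\le (2n-1)\sqrt{rp},
\]
which contradicts $p\ge (2n-1)^2rd^2/(d-r)^2$.

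\textbf{Degeneracy step (main obstacle).} The argument above only works when the relevant rational function is not of the form (constant)$\cdot$($e$-th power), where $e$ is the order of $\chi$, in a way compatible with the Frobenius twists. I would first write $\chi(R(t))$ for $t\in\F_p$ as $\psi\bigl(\prod_k R^{\sigma^k}(t)\bigr)$, where $\sigma$ is Frobenius, or use the Katz/Wan formulation. Then I would determine exactly when $\prod_k \bigl((f(x)+tf(y))/(x+ty)\bigr)^{\sigma^k m_k}$ is a perfect power. Its zeros are $t=-f(x)^{p^k}/f(y)^{p^k}$ and its poles are $t=-x^{p^k}/y^{p^k}$.

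Degeneracy requires the zeros to cancel against or match the poles, meaning $f(x)/f(y)=(x/y)^{p^j}$ for some $j$. If this holds for all independent pairs $(x,y)$, then $f(z)/z^{p^j}$ is constant on a large set. A pigeonhole over $j$, together with the $\F_p$-linearity of $f$, should then force $f(z)=az^{p^j}$ on all of $\F_q$.

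I expect the bookkeeping of this degenerate case to be the hardest part, in particular the choice of a single good pair $(x,y)$ and the verification that no cancellation occurs when $f$ is not a monomial. My first attempt would be to choose $y=1$ and $x$ outside the set where $f(x)/f(1)=x^{p^j}$ for every $j$. That set has size at most $n\,p^{n-1}<q$ by a degree count on the linearized polynomials $f(z)-f(1)z^{p^j}$, so such an $x$ exists; note also that if $f(1)=0$ then $f(z)/z\in D$ already fails at $z=1$.
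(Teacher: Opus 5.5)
Your counting step (additivity via Theorem~\ref{thm:directions}, restriction to the $\F_p$-line through $x$ and $1$, character expansion of $\mathbf{1}_D$, Parseval plus Cauchy--Schwarz) is sound and matches the paper's. \textbf{The gap is in the degeneracy step.} With $y=1$, $\alpha=f(x)/f(1)$ and $\beta=x$, you have $S_\chi=\chi(f(1))\sum_{t\in\F_p}\chi(t+\alpha)\overline{\chi}(t+\beta)$. You claim this can only be large when $\alpha=\beta^{p^j}$ for some $j$. That is false for sums over the prime field. The correct hypotheses (the paper's Lemma~\ref{lem:charactersum}, i.e.\ the corrected Wan bound) also require $\chi$ to be nontrivial on $\F_p[\alpha]^\ast$ or on $\F_p[\beta]^\ast$.

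Suppose $x$ and $f(x)/f(1)$ both lie in a proper subfield $\F_{p^e}$ on which $\chi$ is trivial. For instance take $d=2$, $n=4$, $e=2$: every element of $\F_{p^2}$ is a square in $\F_{p^4}$. Then $t+\alpha,t+\beta\in\F_{p^e}^\ast$ for all $t\in\F_p$, so $S_\chi=\chi(f(1))\,p$, even though no zero matches a pole. Your rule (any $x$ with $f(x)/f(1)\neq x^{p^j}$ for all $j$) does not exclude this. Inside a proof by contradiction you cannot rule out $f(\F_{p^e})\subseteq f(1)\F_{p^e}$ a priori. The zero/pole perfect-power test is the criterion for complete sums over $\F_q$. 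Likewise, the descent $\chi(R(t))=\psi(\prod_k R^{\sigma^k}(t))$ to a character of $\F_p$ is only available when $\chi$ factors through the norm.

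\textbf{The repair is cheap: also require $\F_p[x]=\F_q$.} Then $\overline{\chi}$ is nontrivial on $\F_p[\beta]^\ast=\F_q^\ast$ for every nontrivial $\chi$ with $\chi^d=1$. Also $\alpha\neq\beta^{p^j}$ gives non-conjugacy of $-\alpha$ and $-\beta$. So Lemma~\ref{lem:charactersum} gives $|S_\chi|\le(2n-1)\sqrt p$ for all $d-1$ characters at once. Such an $x$ exists. The excluded set lies in the $n$ kernels of $z\mapsto f(z)-f(1)z^{p^j}$, which are proper $\F_p$-subspaces if $f$ is not a monomial, together with the proper subfields. Its total size is less than $np^{n-1}+2p^{n/2}<p^n$, since $p>(2n-1)^2$.

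One smaller point: your final inequality is non-strict, so it does not contradict $p\ge(2n-1)^2rd^2/(d-r)^2$. The trivial character contributes $r$ to $\sum_\chi|\sum_c\chi(c)|\le d\sqrt r$, so the sum over $\chi\neq 1$ is at most $d\sqrt r-r$. This makes the inequality strict, as in the paper.

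Once repaired, your route genuinely differs from the paper's. The paper normalizes $f(1)=1$ and handles the case $\mathcal{D}_f/\mathcal{D}_f\subseteq H\cup L$ ($L$ the $(p-1)$-th powers) by quoting Carlitz--McConnel. Otherwise it takes directions $a=f(u)/u$ and $b=f(v)/v$ with $a/b\notin H\cup L$. It gets non-conjugacy from $a/b\notin L$ and the character condition from $\chi(a/b)\neq 1$ via Corollary~\ref{cor:charsum}. Your argument needs no Carlitz--McConnel input and derives the monomial form directly from a count on linearized polynomials. Your choice $\F_p[x]=\F_q$ also secures non-degeneracy for every nontrivial $\chi^j$ uniformly. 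By contrast, $a/b\notin H$ only yields the hypothesis $\chi^j(a/b)\neq 1$ of Corollary~\ref{cor:charsum} for $j$ coprime to $d$.
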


To the best knowledge of the authors, none of the existing proofs of Theorem~\ref{thm:CM} extend to the setting of union of cosets in Theorem~\ref{thm:main}. Our proof of Theorem~\ref{thm:main} is based on a novel combination of tools from character sum estimates and finite geometry. We note that when $n=1$, the theorem follows from Theorem~\ref{thm:directions}. Thus, we focus on the case $n\geq 2$. 

We have the following immediate corollary of Theorem~\ref{thm:main}. 

\begin{cor}\label{cor:main}
Let $n,d$ be integers with $n,d\geq 2$. Let $q=p^n$, where $p$ is a prime such that $p^n \equiv 1 \pmod d$ and $p\geq 2(2n-1)^2 d.$ Let $H$ be the subgroup of $\F_q^*$ with index $d$ and let $D$ be a union of at most $d/2$ cosets of $H$. If $f:\F_q \to \F_q$ is a function such that $\mathcal{D}_f \subseteq D$, then there are $a,b \in \F_q$ and an integer $0 \leq j \leq n-1$, such that 
\[
f(x)=a x^{p^j}+b \qquad \text{for all } x\in\F_q.
\]
\end{cor}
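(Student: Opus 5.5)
This should be a direct reduction to Theorem~\ref{thm:main} with $r=\lfloor d/2\rfloor$ (or the actual number of cosets). The plan has two steps: check that the hypothesis $p\ge 2(2n-1)^2 d$ implies the prime bound of Theorem~\ref{thm:main}, and check that $|\mathcal D_f|\le (q+1)/2$ holds automatically.

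Write $D$ as a union of $r$ cosets of $H$ with $r\le d/2$. We may assume $r\ge 1$. Indeed, if $D$ is empty then $\mathcal D_f$ must be empty, which is impossible since $q\ge 2$ gives a pair $x\ne y$. Also $r\le d/2<d$, so $1\le r\le d-1$, as Theorem~\ref{thm:main} requires.

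\emph{First hypothesis.} The function $r\mapsto rd^2/(d-r)^2$ is increasing in $r$ on $[1,d)$. At $r=d/2$ it equals
\[
\frac{(d/2)\,d^2}{(d/2)^2}=2d .
\]
So for every $r\le d/2$,
\[
\frac{(2n-1)^2 rd^2}{(d-r)^2}\le 2(2n-1)^2 d\le p,
\]
which is exactly the prime condition in Theorem~\ref{thm:main}.

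\emph{Second hypothesis.} Since $\mathcal D_f\subseteq D$,
\[
|\mathcal D_f|\le |D|=r\cdot\frac{q-1}{d}\le \frac{q-1}{2}\le \frac{q+1}{2}.
\]

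With both hypotheses verified, Theorem~\ref{thm:main} gives $f(x)=ax^{p^j}+b$. There is no real obstacle here; the only points needing care are the monotonicity of $r\mapsto rd^2/(d-r)^2$, which makes the worst case $r=d/2$ (or $r=\lfloor d/2\rfloor$ when $d$ is odd), and excluding the degenerate case $r=0$.
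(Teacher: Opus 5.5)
Your proposal is correct and is exactly the reduction the paper intends: the paper states this as an immediate corollary of Theorem~\ref{thm:main} without writing out a proof. Your two checks are what ``immediate'' leaves unstated: the monotonicity of $r\mapsto rd^2/(d-r)^2$, which gives the bound $2(2n-1)^2 d$ at $r=d/2$, and the estimate $|\mathcal D_f|\le |D| = r(q-1)/d\le (q-1)/2$. Your exclusion of the degenerate case $r=0$ is also correct.
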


We remark that the lower bound assumption on $p$ in Corollary~\ref{cor:main} (and thus Theorem~\ref{thm:main}) is essential in general. Indeed, if this assumption is removed, then the conclusion can fail even for very small fields, as the following example shows.

\begin{ex}
Consider the field $\F_{25}$. Let $u\in \F_{25}$ such that $u^2=2$; note that $u\notin \F_5$. Consider the subgroup $H=\F_5^*$ with index $d=6$. Define the following union of $3$ cosets of $H$:
\[
D:=uH\ \cup\ (1+u)H\ \cup\ (1-u)H.
\]
Consider the function
\[
f(x)=x+u\,x^5, \qquad \forall x\in \F_{25}.
\]
Then clearly $f$ is not of the desired form $ax^{5^j}+b$. However, we will show below that $\mathcal{D}_f \subseteq D$.

For distinct $x,y\in\mathbb F_{25}$, we have
\[
\frac{f(x)-f(y)}{x-y}
=
\frac{(x-y)+u(x^5-y^5)}{x-y}
=
1+u(x-y)^4.
\]
A direction computation shows that 
$(\mathbb F_{25}^*)^4=\{1,-1,2+2u,2-2u,3+2u,3-2u\}$ and thus
\[
D_f
=
\{\,2u,\ 3u,\ 1+u,\ 1-u,\ 2+2u,\ 2-2u\,\}.
\]
It follows that $D_f\subseteq D$ since 
\[
\{2u,3u\}\subset uH,\qquad
\{1+u,2+2u\}\subset (1+u)H,\qquad
\{1-u,2-2u\}\subset (1-u)H.
\]
\end{ex}

\subsection{An extension of the Blokhuis--Sziklai theorem and the Asgarli--Yip theorem}
Inspired by the proof of Theorem~\ref{thm:main}, we also make new progress on a relevant question lying in the intersection of arithmetic combinatorics and algebraic graph theory. 

We first review some necessary backgrounds. The following theorem, due to Blokhuis \cite{B84} and Sziklai \cite{Szi99}, is well-known.
\begin{thm}[\cite{B84, Szi99}]\label{thm:BS}
Let $q$ be a prime power. Let \(S\) be a subgroup of \(\F_{q^2}^\ast\) with index $d$, where $d\mid (q+1)$. If $A\subseteq \F_{q^2}$ with $|A|=q$ and $0,1\in A$ such that \[A-A=\{a-b: a,b\in A\} \subseteq S \cup \{0\},\] then $A$ is the subfield $\F_q$. 
\end{thm}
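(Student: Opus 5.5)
The plan is to reduce to Theorem~\ref{thm:directions} by viewing $\F_{q^2}$ as the affine plane $AG(2,q)$, and then to use the multiplicative structure of $S$ to upgrade ``additive'' to ``equal to $\F_q$''. I assume $d\ge 2$, since for $d=1$ the hypothesis is vacuous and the conclusion fails.

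\textbf{Step 1: $\F_q^*\subseteq S$.} Since $d\mid (q+1)$, the group $S$ consists of the $x$ with $x^{(q^2-1)/d}=1$. Every $c\in\F_q^*$ satisfies $c^{q-1}=1$, and $(q-1)$ divides $(q^2-1)/d=(q-1)\cdot\frac{q+1}{d}$. Hence $\F_q^*\subseteq S$, so $S$ is a union of $\F_q^*$-cosets.

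\textbf{Step 2: directions and the reduction.} Fix a basis $\{1,\omega\}$ of $\F_{q^2}/\F_q$ and identify $\F_{q^2}$ with $AG(2,q)$. Directions in $AG(2,q)$ correspond exactly to the $q+1$ points of $\F_{q^2}^*/\F_q^*$, and $x\mapsto x^{q-1}$ identifies this quotient with $\mu_{q+1}$.

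The hypothesis $A-A\subseteq S\cup\{0\}$ says that every direction determined by $A$ lies in the image of $S$. By Step 1 this image is $\mu_{(q+1)/d}$, which has at most $(q+1)/2$ elements. So $A$, a set of $q$ points, determines at most $(q+1)/2$ directions, and in particular misses some direction.

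Apply an $\F_q$-linear change of coordinates, namely multiplication by a suitable element of $\F_{q^2}^*$, so that the missing direction becomes vertical. Then $A$ becomes the graph of a function $f:\F_q\to\F_q$ with $f(0)=0$ (using $0\in A$) and $|\mathcal D_f|\le (q+1)/2$. Theorem~\ref{thm:directions} then shows that $f$ is additive, hence $A$ is an additive subgroup of $\F_{q^2}$ of order $q$.

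\textbf{Step 3: from additive to $A=\F_q$.} This is the main obstacle. In the original coordinates,
\[
A=\{x+L(x)\omega : x\in \F_q\}\quad\text{(after relabeling)},
\]
where $L(X)=\sum_{i<n} c_iX^{p^i}$ is an $\F_p$-linearized polynomial. The condition becomes the identity
\[
\bigl(x+L(x)\omega\bigr)^{(q^2-1)/d}=1\qquad\text{for all }x\in\F_q^*.
\]
By Step 1 this is equivalent to $\bigl((x+L(x)\omega)^{q-1}\bigr)^{(q+1)/d}=1$.

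I would rewrite $(x+L(x)\omega)^{q-1}=(x+L(x)\omega^q)/(x+L(x)\omega)$ and clear denominators. This produces the polynomial congruence
\[
\bigl(X+L(X)\omega^q\bigr)^{m}\equiv \bigl(X+L(X)\omega\bigr)^{m}\pmod{X^q-X},\qquad m=\tfrac{q+1}{d}.
\]
The reduced degrees are controlled by the $p$-adic digits of $m$ and the exponents $p^i$ in $L$. Comparing the top coefficients, in the style of a Rédei/lacunary-polynomial argument, should force $L$ to be a monomial $cX^{p^j}$. Iterating on the next coefficients should then force $j=0$, so that $L(X)=cX$.

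With $L(X)=cX$, the set $A=(1+c\omega)\F_q$ is a one-dimensional $\F_q$-subspace. Finally, $1\in A$ gives $A=\F_q$.

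If the coefficient comparison becomes unwieldy, I would fall back on the direction-theoretic refinement of Theorem~\ref{thm:directions}. That result makes $A$ linear over a subfield $\F_s$ with $q/s+1\le|\mathcal D_A|$. I would then combine this bound with the fact that the directions of $A$ lie in the coset-closed set $\mu_m$, and aim to exclude every $s<q$.
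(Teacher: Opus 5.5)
Your Steps 1 and 2 are correct. They are the same reduction the paper uses in Section~\ref{sec:main2} for Theorem~\ref{thm:main2}; the paper does not prove Theorem~\ref{thm:BS} itself but quotes it from \cite{B84,Szi99}. One small fix: instead of rescaling by $c$ and then ``relabeling'', choose $\omega\in\F_{q^2}^*\setminus S$ from the start. Since $\F_q^*\subseteq S$, a vertical line meeting $A$ twice would force $\omega\in S$, so $A$ itself equals $\{x+L(x)\omega : x\in\F_q\}$, and $1\in A$ gives $L(1)=0$. With $L(1)=0$, a monomial $L=cX^{p^j}$ is already $0$, so your extra step forcing $j=0$ is not needed.

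The genuine gap is Step 3, which carries the whole content of the theorem beyond additivity, and you give no argument for it. The congruence $\bigl(X+L(X)\omega^q\bigr)^m\equiv\bigl(X+L(X)\omega\bigr)^m \pmod{X^q-X}$ is a correct restatement, but there is no visible ``top coefficient'' to compare. Expanding gives $\sum_k\binom{m}{k}X^{m-k}L(X)^k(\omega^{qk}-\omega^k)$. By Lucas' theorem many of the $\binom mk$ survive modulo $p$; for $d=2$, every base-$p$ digit of $m=(q+1)/2$ is at least $(p-1)/2$. The degrees run up to about $mp^{n-1}$, far beyond $q$, so reduction modulo $X^q-X$ mixes the terms thoroughly. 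Nothing you wrote explains why $L$ must be a monomial.

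Your fallback stops at the same place. The direction theorem does make $A$ an $\F_s$-subspace with $q/s+1\le (q+1)/d$. But ``exclude every $s<q$'' means showing that no $\F_s$-subspace of $\F_s$-dimension at least $2$ containing $1$ has all its nonzero elements in $S$. In other words, you must show that no $\F_s$-linear set of rank at least $2$ in $PG(1,q)\cong\mu_{q+1}$ through the point $1$ lies inside $\mu_{(q+1)/d}$, and that is essentially the theorem again.

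For comparison, the paper closes the analogous step of Theorem~\ref{thm:main2} as follows. It takes $b\in A$ generating $\F_{q^2}$ (Lemma~\ref{lem:gen}) and notes that $(b-\lambda)/(a-\lambda)\in S$ for all $\lambda\in\F_p$. It then contradicts this with the Weil-type bound of Corollary~\ref{cor:charsum2}. That argument needs $n\ge 2$ and $p$ large in terms of $n$ and $d$, which is why the unrestricted Theorem~\ref{thm:BS} is cited rather than proved. As written, your proposal establishes only that $A$ is an additive subgroup of order $q$.
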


Theorem~\ref{thm:BS} was first proved by Blokhuis \cite{B84} in the case where \(q\) is odd and \(S\) is the index-\(2\) subgroup, confirming a conjecture of van Lint and MacWilliams \cite{vLM78}. It was later extended by Sziklai \cite{Szi99} to all proper subgroups \(S\) of index \(d\mid (q+1)\). We remark that the divisibility condition \(d\mid (q+1)\) is necessary; indeed, if \(d\nmid (q+1)\), then no such set \(A\) exists (see \cite[Theorem~1.4]{Y26}). Theorem~\ref{thm:BS} may be viewed as an instance of the sum--product phenomenon, which broadly predicts that a large subset of a finite field cannot simultaneously exhibit strong additive and multiplicative structure unless it is ``close to'' a subfield; see \cite{T09} for a comprehensive discussion.

In the language of algebraic graph theory, Theorem~\ref{thm:BS} is also known as a characterization of maximum cliques in certain generalized Paley graphs of square order, and equivalently as an Erd\H{o}s--Ko--Rado (EKR) theorem for such graphs; see \cite{AY22, Y24, Y26} for further discussion.

We note that although the proof techniques for Theorem~\ref{thm:CM} and Theorem~\ref{thm:BS} are quite different, the two results are similar in the sense that they both exhibit rigidity phenomena over finite fields. Following a similar line of investigation, it is interesting to study the extension of Theorem~\ref{thm:BS} to general subsets $S$ that ``behave like" subgroups \cite{AY22, NM, Y24, Y26}. Indeed, recently, the second author \cite{Y26} extended Theorem~\ref{thm:BS} by proving an analogue of Theorem~\ref{thm:Y} in the setting of Theorem~\ref{thm:BS}.

Inspired by the proof of Theorem~\ref{thm:main}, we establish the following theorem, which can be viewed as an analogue of Theorem~\ref{thm:main} in the setting of Theorem~\ref{thm:BS}. More precisely, we extend Theorem~\ref{thm:BS} to the setting where $S$ is given by a union of cosets of a subgroup $H$ of $\F_{q^2}^*$ with index $d \mid (q+1)$. We note that none of the existing proofs of Theorem~\ref{thm:BS} appear to extend to this general setting.

\begin{thm}\label{thm:main2}
Let $n,d,r$ be integers with $n,d\geq 2$ and $1\leq r\leq d/2$. Let $q=p^n$, where $p$ is a prime such that $q \equiv -1 \pmod d$ and 
\begin{equation}\label{eq:lbp}
p\geq \frac{(4n-1)^2 rd^2}{(d-r)^2}.    
\end{equation}
Let $H$ be the subgroup of $\F_{q^2}^*$ with index $d$. Let $S$ be a union of $r$ cosets of $H$, such that $\F_q^* \subseteq S$. If $A \subseteq \F_{q^2}$ such that $0,1\in A$, $|A|=q$, and $A-A \subseteq S \cup \{0\}$, then $A=\F_q$.
\end{thm}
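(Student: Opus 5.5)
The plan is to first reduce, via the theory of directions, to the case where $A$ is an $\F_p$-subspace of $\F_{q^2}$. Then a character sum estimate along short $\F_p$-lines inside $A$ should force every ratio of two nonzero elements of $A$ to lie in $\F_q$.

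\emph{Step 1 (geometry).} Since $d\mid q+1$, the subgroup $H$ of index $d$ contains $\F_q^\ast$, which has index $q+1$. So $S$ is a union of $\F_q^\ast$-cosets. Identify $\F_{q^2}$ with $AG(2,q)$ via an $\F_q$-basis. Then the direction of $a-b$ is exactly the class of $a-b$ in $\F_{q^2}^\ast/\F_q^\ast\cong PG(1,q)$. The set $A$ therefore determines at most $|S|/(q-1)=r(q+1)/d\le (q+1)/2$ directions, using $r\le d/2$. In particular some direction is not determined. After an $\F_q$-linear change of coordinates sending that direction to $\infty$, $A$ becomes the graph of a function $f:\F_q\to\F_q$ with $f(0)=0$, since $0\in A$, and with $|\mathcal D_f|\le (q+1)/2$. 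Theorem~\ref{thm:directions} then says $f$ is additive. Hence $A$ is an $\F_p$-subspace of $\F_{q^2}$ of size $q$, and $A\setminus\{0\}\subseteq S$.

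\emph{Step 2 (character sums on lines).} Take nonzero $a,b\in A$ and put $c=a/b$. Then $b(c+t)\in A$ for every $t\in\F_p$, so $c+t\in b^{-1}S$ for all $t\in\F_p$ with $c+t\ne0$. Write the indicator of the union of $r$ cosets $b^{-1}S$ through the characters $\chi$ of $\F_{q^2}^\ast$ trivial on $H$, which are $d$ in number. This gives
\[
p-1\le \#\{t\in\F_p: c+t\in b^{-1}S\}=\frac{r}{d}(p-1)+\frac1d\sum_{\chi\ne\chi_0}\sum_{g}\overline{\chi}(g)\sum_{t\in\F_p}\chi(c+t),
\]
where $g$ runs over the $r$ coset representatives. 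Suppose $c\notin\F_q$. Then the inner sum is an incomplete multiplicative character sum over the prime subfield, and I would bound it by $(k-1)\sqrt p$ with $k=[\F_p(c):\F_p]\le 2n$. This is the Weil/Katz bound for sums of $\chi(c+t)$ over a subfield; equivalently, write $\chi(c+t)=\chi\bigl(\prod_{\sigma}(\ldots)\bigr)$ via the minimal polynomial of $c$ and apply Weil to a polynomial of degree at most $2n$. With some room in the constant (the paper's $(4n-1)$ suggests the degree count picks up an extra factor of about $2$), this yields
\[
(p-1)\frac{d-r}{d}\le r\,(4n-1)\sqrt p,
\]
which contradicts~\eqref{eq:lbp}. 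So every ratio $a/b$ of nonzero elements of $A$ lies in $\F_q$.

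\emph{Step 3 (conclusion).} Taking $b=1\in A$ gives $A\subseteq\F_q$, and since $|A|=q$ we get $A=\F_q$.

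The main obstacle is Step 2. One must verify that for $c\notin\F_q$ and every nontrivial $\chi$ trivial on $H$, the map $t\mapsto\chi(c+t)$ on $\F_p$ is genuinely non-degenerate. Concretely, the relevant polynomial in $t$ must not be a $d$-th power up to constants, and the Weil-type bound must hold with a constant linear in $n$. My first attempt is to express $\chi(c+t)$ through the norm $N_{\F_{q^2}/\F_{p^k}}$ restricted to $\F_p(c)$, reducing to $\chi'(m_c(t))$ for the minimal polynomial $m_c$ of $c$. Since $m_c$ is squarefree of degree $k\le 2n$, it is not a $d$-th power, and the standard Weil bound $(k-1)\sqrt p$ applies. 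The case $c\in\F_q\setminus\F_p$ is harmless, because then $c+t\in\F_q^\ast\subseteq H$ and the condition holds trivially. This is exactly why the argument distinguishes $\F_q$ rather than $\F_p$.
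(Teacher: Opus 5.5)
Your Step~1 is correct and is essentially the paper's reduction to the case where $A$ is an $\F_p$-subspace with $A\setminus\{0\}\subseteq S$. The genuine gap is in Step~2. The bound $\bigl|\sum_{t\in\F_p}\chi(c+t)\bigr|\le (k-1)\sqrt p$ (Weil--Katz, or Lemma~\ref{lem:charactersum}) requires $\chi$ to be \emph{nontrivial on} $\F_p(c)^\ast$. For characters trivial on $H$ this can fail even though $c\notin\F_q$. For example, if $n$ is odd then $\F_{p^2}\not\subseteq\F_q$, yet $\F_{p^2}^\ast\subseteq H$ whenever $d$ divides $(q^2-1)/(p^2-1)$. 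Concretely, take $n=d=3$ and $p\equiv 2\pmod 3$, with $p$ large enough for \eqref{eq:lbp}. Then $q\equiv -1\pmod 3$ and $3\mid p^2-p+1$, which divides $(q^2-1)/(p^2-1)$. For $c\in\F_{p^2}\setminus\F_p$, every $c+t$ lies in $\F_{p^2}^\ast\subseteq H\subseteq b^{-1}S$ (because $b\in S$). So every sum $\sum_t\chi(c+t)$ equals $p$, and no contradiction can be extracted from such a pair. Hence the claim ``every ratio $a/b$ lies in $\F_q$'' cannot be proved pair by pair. Your norm reduction hides this: $\chi|_{\F_p(c)^\ast}$ need not factor through $N_{\F_p(c)/\F_p}$, and even when it does, the induced character of $\F_p^\ast$ may be trivial. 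In that case the squarefreeness of $m_c$ is beside the point.

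The missing idea is to choose the element so that it generates the whole field. $A$ is an $n$-dimensional $\F_p$-space containing $1$, so if $A\neq\F_q$, the counting argument of Lemma~\ref{lem:gen} gives $c\in A$ lying in no proper subfield of $\F_{q^2}$. Taking $b=1$, every nontrivial $\chi$ is then nontrivial on $\F_p(c)^\ast=\F_{q^2}^\ast$, and the bound applies. This is what the paper does: its quotient $(b-\lambda)/(a-\lambda)$ has $a-\lambda\in\F_q^\ast\subseteq H$, so it is really your sum $\sum_t\chi(c+t)$.

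Separately, your final inequality does not contradict \eqref{eq:lbp}. Bounding each $\bigl|\sum_k\overline{\chi}(g_k)\bigr|$ trivially by $r$ only yields $p\lesssim r^2(4n-1)^2d^2/(d-r)^2$, which is compatible with \eqref{eq:lbp} once $r\ge 2$. You need Parseval plus Cauchy--Schwarz, $\sum_{\chi}\bigl|\sum_k\overline{\chi}(g_k)\bigr|\le d\sqrt r$ (Lemma~\ref{lem:CS}), to obtain the factor $\sqrt r$. Also, for $c\notin\F_p$ the count is exactly $p$, so the main term is $rp/d$, not $r(p-1)/d$.
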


While the lower bound on $p$ in inequality~\eqref{eq:lbp} might not be optimal, we remark that Theorem~\ref{thm:main2} fails if there is no lower bound assumption on $p$; we refer to \cite[Example 2.21]{AY22} and \cite{GY24} for related discussions. Again we focus on the case $n\geq 2$: when $n=1$, it is known that Theorem~\ref{thm:main2} follows from Theorem~\ref{thm:directions}; see for example \cite[Corollary 2.14]{AY22}.

Theorem~\ref{thm:main2} refines the main result of Asgarli and the second author
\cite[Theorem~1.3]{AY22}. Its main advantage is that the required lower bound on \(p\)
is uniform, \emph{i.e.}, independent of the choice of \(S\). By contrast, in
\cite[Theorem~1.3]{AY22} the lower bound on \(p\) depends on \(S\); moreover, their
theorem does not always apply to such sets \(S\), since it imposes an (undesired)
\(\varepsilon\)-lower-bounded assumption on \(S\).

The following corollary follows from Theorem~\ref{thm:main2} immediately.

\begin{cor}\label{cor:main2}
Let $n,d$ be integers with $n,d\geq 2$. Let $q=p^n$, where $p$ is a prime such that $q \equiv -1 \pmod d$ and \(p\geq 2(4n-1)^2 d.\) 
Let $H$ be the subgroup of $\F_{q^2}^*$ with index $d$. Let $S$ be a union of at most $d/2$ cosets of $H$, such that $\F_q^* \subseteq S$. If $A \subseteq \F_{q^2}$ such that $0,1\in A$, $|A|=q$, and $A-A \subseteq S \cup \{0\}$, then $A=\F_q$.    
\end{cor}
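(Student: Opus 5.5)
The plan is to deduce Corollary~\ref{cor:main2} directly from Theorem~\ref{thm:main2}. All hypotheses other than the bound on \(p\) carry over verbatim. So it suffices to check two things: that the number \(r\) of cosets of \(H\) making up \(S\) lies in the range \(1\le r\le d/2\) required by Theorem~\ref{thm:main2}, and that the assumption \(p\ge 2(4n-1)^2d\) implies inequality~\eqref{eq:lbp} for this \(r\).

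First, let \(r\) be the number of cosets of \(H\) whose union is \(S\). By assumption \(r\le d/2\). Since \(\F_q^\ast\subseteq S\), the set \(S\) is nonempty, so \(r\ge 1\). Hence \(1\le r\le d/2\), as Theorem~\ref{thm:main2} requires.

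Next, consider the function \(\phi(r)=rd^2/(d-r)^2\) for \(1\le r\le d/2\). On this range the numerator \(r\) is increasing and the denominator \((d-r)^2\) is positive and decreasing, so \(\phi\) is increasing. Its maximum is therefore attained at \(r=d/2\):
\[
\phi(r)\le \frac{(d/2)\,d^2}{(d/2)^2}=2d .
\]
Consequently
\[
\frac{(4n-1)^2rd^2}{(d-r)^2}\le 2(4n-1)^2d\le p ,
\]
which is exactly \eqref{eq:lbp}.

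With both conditions verified, Theorem~\ref{thm:main2} applies to the set \(A\) and gives \(A=\F_q\). There is no real obstacle here; the only points needing a moment's care are the monotonicity of \(\phi\) and the observation that \(r\ge1\) follows from \(\F_q^\ast\subseteq S\).
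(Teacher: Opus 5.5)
Your proposal is correct and takes the paper's approach: the paper says only that Corollary~\ref{cor:main2} follows immediately from Theorem~\ref{thm:main2}. Your checks that $1\le r\le d/2$ (using $\F_q^\ast\subseteq S$) and that $rd^2/(d-r)^2\le 2d$ on this range are exactly the verification that remark leaves implicit.
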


In particular, this improves and extends \cite[Theorem 2.20]{AY22} by Asgarli and the second author, where the same result was proved for even numbers $d\geq 4$ and primes $p>4.1n^2d^4/\pi^2(d-1)^2$ with 
$S=\bigcup_{j=0}^{d/2-1} g^j H,$
where $g$ is a primitive root of $\F_{q^2}$. Note that the hypothesis in \cite{AY22} imposes a lower bound on $p$ of order $n^2d^2$, whereas Corollary~\ref{cor:main2} requires only a lower bound of order $n^2d$.

\subsection*{Outline of the paper}
In Section~\ref{sec:prelim}, we prove several preliminary lemmas. In Section~\ref{sec:main1}, we present the proof of Theorem~\ref{thm:main}. Finally, in Section~\ref{sec:main2}, we use a similar approach to prove Theorem~\ref{thm:main2}.

\section{Preliminaries}\label{sec:prelim}
The following lemma provides an estimate for certain character sums over subfields. 

\begin{lem}\label{lem:charactersum}
Let $p$ be a prime and $n$ a positive integer. Let $\xi_1,\xi_2\in \F_{p^n}$ such that they are not Galois conjugates over $\F_p$, that is, $\xi_1\neq \xi_2^{p^r}$ for any $r\in \{0,1,\ldots, n-1\}$. Let $\chi_1$ and $\chi_2$ be multiplicative characters of $\F_{p^n}$ such that there exists $j\in \{1,2\}$ such that $\chi_j$ is not identically~$1$ on the set $\F_p[\xi_j] \setminus \{0\}$. Then
$$\bigg |\sum_{\lambda\in\mathbb{F}_p}\chi_1\big(\lambda-\xi_1\big) \chi_2\big(\lambda-\xi_2\big)\bigg|\le(2n-1)\sqrt p.$$
\end{lem}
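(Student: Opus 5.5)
The plan is the standard reduction to a Weil-type bound for character sums over $\F_p$ with polynomial arguments, via the norm map. For $\lambda\in\F_p$, write $N_j(\lambda)=\prod_{r}(\lambda-\xi_j^{p^r})$, where the product runs over the distinct conjugates of $\xi_j$ over $\F_p$; this is the minimal polynomial $m_j$ of $\xi_j$ over $\F_p$, of degree $n_j=[\F_p[\xi_j]:\F_p]$. Since each $\chi_j$ restricted to $\F_p[\xi_j]$ need not factor through the norm, I would argue directly with characters of $\F_{p^n}$.

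First, reduce to a sum of characters of polynomial values. Let $\psi$ be a generator of the character group of $\F_{p^n}^*$, and write $\chi_j=\psi^{a_j}$. Consider the additive-free setting: the map $\lambda\mapsto \lambda-\xi_j$ for $\lambda\in\F_p$. Use the identity that for $\lambda\in\F_p$,
\[
\prod_{r=0}^{n-1}\bigl(\lambda-\xi_j\bigr)^{p^r}=\prod_{r=0}^{n-1}\bigl(\lambda-\xi_j^{p^r}\bigr)=N_{\F_{p^n}/\F_p}(\lambda-\xi_j).
\]
This suggests the cleaner route via Weil's theorem over $\F_{p^n}$ seen as an $\F_p$-vector space: the sum equals $\sum_{\lambda\in\F_p}\chi(F(\lambda))$, where we view $\chi_1(\lambda-\xi_1)\chi_2(\lambda-\xi_2)$ as a multiplicative character of the $\F_p$-algebra. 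Concretely, I would invoke the well-known Wan/Katz-type estimate: for a multiplicative character $\chi$ of $\F_{p^n}$ and $\xi\in\F_{p^n}$ with $\F_p[\xi]$ of degree $m$, one has $\bigl|\sum_{\lambda\in\F_p}\chi(\lambda-\xi)\bigr|\le (m-1)\sqrt p$ when $\chi$ is nontrivial on $\F_p[\xi]^*$. This is the Davenport–Hasse/Katz bound, obtained by interpreting $\lambda\mapsto\chi(\lambda-\xi)$ as a Hecke character of $\F_p[T]$ modulo $m_\xi(T)$ and applying the Riemann hypothesis for the associated $L$-function, which is a polynomial of degree at most $\deg(\text{modulus})-1$.

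Second, apply the same principle to the product. The function $\lambda\mapsto\chi_1(\lambda-\xi_1)\chi_2(\lambda-\xi_2)$ is a Dirichlet character of $\F_p[T]$ modulo $M(T)=m_1(T)m_2(T)$. It is well defined because $g(T)\mapsto \chi_j(g(\xi_j))$ is a character of $(\F_p[T]/m_j)^*\cong\F_p[\xi_j]^*$. The non-conjugacy hypothesis ensures $m_1\ne m_2$, so $m_1,m_2$ are coprime irreducibles and the product character is a genuine character modulo $M$ of degree $n_1+n_2\le 2n$. It is nontrivial, since by CRT it is trivial only if both components are trivial, which the hypothesis excludes. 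Then the sum over monic degree-one polynomials $T-\lambda$ equals minus the $T$-coefficient of the $L$-function $L(u,\chi)=\sum_{g\text{ monic}}\chi(g)u^{\deg g}$. This $L$-function is a polynomial of degree at most $\deg M-1\le 2n-1$, and by Weil all its inverse roots have absolute value $\sqrt p$ or $1$. Hence the coefficient of $u$, a sum of at most $2n-1$ inverse roots, has absolute value at most $(2n-1)\sqrt p$. If $n_1=n_2=n$ this gives exactly the bound; otherwise it is better.

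The main obstacle is justifying the $L$-function facts: that $L(u,\chi)$ is a polynomial of degree $\le\deg M-1$ for nontrivial $\chi$, which is elementary since sums over all monic $g$ of fixed degree $\ge\deg M$ vanish, and that its roots satisfy the Riemann hypothesis, which I would cite (Weil). A minor point is also checking the identification $\lambda-\xi_j=g(\xi_j)$ with $g=\lambda-T$ and the sign, which only changes the sum by the constant $\chi_1(-1)\chi_2(-1)$ of modulus one.
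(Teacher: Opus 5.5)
Your argument is correct. It differs from the paper only in that the paper does not prove the estimate itself: it applies \cite[Corollary 3.5]{MY25} (the corrected form of \cite[Corollary 2.4]{W97}) with $f_1(T)=T-\xi_1$ and $f_2(T)=T-\xi_2$, and stops.

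What you wrote is in effect the standard $L$-function proof of that cited bound, specialized to linear $f_j$:
\begin{enumerate}
\item You encode $\lambda\mapsto\chi_1(\lambda-\xi_1)\chi_2(\lambda-\xi_2)$ as a Dirichlet character $\Psi$ of $\F_p[T]$ modulo $M=m_1m_2$.
\item Non-conjugacy gives $m_1\neq m_2$, hence coprime, and CRT shows $\Psi$ is nontrivial exactly under the stated hypothesis.
\item $L(u,\Psi)$ has degree at most $\deg M-1$, because character sums over monic polynomials of degree $\ge\deg M$ vanish.
\item Weil's Riemann hypothesis bounds the linear coefficient: the inverse roots have modulus $\sqrt p$ or $1$, the latter coming from imprimitivity or evenness.
\end{enumerate}

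The citation buys brevity. Your route is self-contained modulo Weil, and it makes clear why the hypothesis must be nontriviality of $\chi_j$ on $\F_p[\xi_j]^*$ rather than on all of $\F_{p^n}^*$. It also gives the slightly sharper constant $(n_1+n_2-1)\sqrt p$, where $n_j=[\F_p[\xi_j]:\F_p]$.

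A few cosmetic points:
\begin{itemize}
\item The norm identity and the single-$\xi$ Katz-type estimate in your first paragraph are never used and can be dropped.
\item Up to the unimodular factor $\chi_1(-1)\chi_2(-1)$, the $\lambda$-sum equals the coefficient of $u$ in $L(u,\Psi)$, which is $-\sum_i\alpha_i$. So ``minus the $T$-coefficient'' is a wording slip, harmless for absolute values.
\item Matching the terms with $\lambda=\xi_j\in\F_p$ to $\Psi(T-\lambda)=0$ requires the convention $\chi_j(0)=0$, also for trivial $\chi_j$. This is the convention implicit in the statement.
\end{itemize}
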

\begin{proof}
This follows immediately from \cite[Corollary 3.5]{MY25} (a corrected version of \cite[Corollary 2.4]{W97}) by setting $f_1(T)=T-\xi_1$ and $f_2(T)=T-\xi_2$. 
\end{proof}

Next, we use Lemma~\ref{lem:charactersum} to deduce the following corollary, which is crucial in the proof of Theorem~\ref{thm:main}.
\begin{cor}\label{cor:charsum}
Let $p$ be a prime and $n$ a positive integer. Let $\chi$ be a nontrivial multiplicative character of $\F_{p^n}$. Let $a,b\in \F_{p^n}^*$ such that 
\[\chi(a/b)\neq 1, \qquad \text{and} \qquad (a/b)^{(p^n-1)/(p-1)}\neq 1.
\]
Let $u,v\in \F_{p^n}^*$ such that $u/v\notin \F_p$. Then
$$\bigg |\sum_{\lambda \in\mathbb{F}_p}\chi \bigg(\frac{au-b\lambda v}{u-\lambda v}\bigg)\bigg|\le(2n-1)\sqrt p.$$ 
\end{cor}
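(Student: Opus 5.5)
We reduce the sum to the shape of Lemma~\ref{lem:charactersum}. Since $u/v\notin\F_p$, the denominator $u-\lambda v$ never vanishes for $\lambda\in\F_p$. The numerator $au-b\lambda v$ vanishes for at most one $\lambda$, and then $\chi(0)=0$ by the usual convention. Factoring, we write
\[
\frac{au-b\lambda v}{u-\lambda v}=\frac{b v}{v}\cdot\frac{\lambda-\frac{au}{bv}}{\lambda-\frac{u}{v}}.
\]
Put $\xi_1=\frac{au}{bv}$ and $\xi_2=\frac{u}{v}$. Then the sum equals
\[
\chi(b)\sum_{\lambda\in\F_p}\chi(\lambda-\xi_1)\,\overline{\chi}(\lambda-\xi_2).
\]
Taking $\chi_1=\chi$ and $\chi_2=\overline\chi=\chi^{-1}$, it suffices to verify the two hypotheses of Lemma~\ref{lem:charactersum} and use $|\chi(b)|=1$.

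\textbf{Non-conjugacy.} Suppose $\xi_1=\xi_2^{p^r}$ for some $0\le r\le n-1$. Then $\frac{a}{b}=\xi_2^{p^r-1}$. Taking norms to $\F_p$, the norm of $\xi_2^{p^r-1}$ is $N(\xi_2)^{p^r-1}$, and this equals $1$ because $N(\xi_2)\in\F_p^*$ and $p-1\mid p^r-1$. Hence
\[
(a/b)^{(p^n-1)/(p-1)}=N(a/b)=1,
\]
which contradicts the second hypothesis. So $\xi_1$ and $\xi_2$ are not Galois conjugates.

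\textbf{Nontriviality of a character on a subfield.} We need some $j$ with $\chi_j$ not identically $1$ on $\F_p[\xi_j]^*$. Suppose instead that $\chi$ is trivial on $\F_p[\xi_1]^*$ and that $\chi^{-1}$, equivalently $\chi$, is trivial on $\F_p[\xi_2]^*$. I expect this step to be the main obstacle, and the key is to use $\chi(a/b)\neq1$. Note that $a/b=\xi_1/\xi_2$.

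Both $\xi_1$ and $\xi_2$ lie in the compositum $K=\F_p[\xi_1]\F_p[\xi_2]$. However, $\chi$ being trivial on two subfields does not immediately force it to be trivial on their compositum, so we avoid the compositum. Instead we use directly that $\xi_1\in\F_p[\xi_1]^*$ and $\xi_2\in\F_p[\xi_2]^*$; both are nonzero since $a,b,u,v\neq0$. Then
\[
\chi(a/b)=\chi(\xi_1)\,\chi(\xi_2)^{-1}=1\cdot1=1,
\]
contradicting the hypothesis $\chi(a/b)\neq1$. Hence some $\chi_j$ is nontrivial on $\F_p[\xi_j]^*$. Lemma~\ref{lem:charactersum} then gives the bound $(2n-1)\sqrt p$.

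Two details remain. The only fussy point is the convention $\chi(0)=0$ when the numerator vanishes, and this is consistent with how Lemma~\ref{lem:charactersum} treats the factor $\chi_1(\lambda-\xi_1)$. The factorization of the numerator, $au-b\lambda v=-bv(\lambda-\xi_1)$, and of the denominator, $u-\lambda v=-v(\lambda-\xi_2)$, gives the constant $\chi(b)$ exactly, as used above.
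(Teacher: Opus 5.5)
Your proof is correct and follows the paper's argument essentially verbatim. You make the same choice $\xi_1=au/(bv)$, $\xi_2=u/v$, $\chi_1=\chi$, $\chi_2=\chi^{-1}$. You use the same reduction of non-conjugacy to $(a/b)^{(p^n-1)/(p-1)}=1$, phrased via the norm where the paper writes $\xi_2^{p^r-1}=(\xi_2^{p-1})^{(p^r-1)/(p-1)}$. And the nontriviality hypothesis is handled identically, via $\chi(\xi_1)=\chi(\xi_2)=1 \Rightarrow \chi(a/b)=1$.
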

\begin{proof}
Let $d$ be the order of $\chi$. Let $\chi_1=\chi$ and $\chi_2=\chi^{d-1}$. Let $\xi_1=au/bv$ and $\xi_2=u/v$. 

We claim that $\xi_1$ and $\xi_2$ are not Galois conjugates over $\F_p$. Suppose otherwise that $\xi_1=\xi_2^{p^r}$ for some $r\in \{0,1,\ldots, n-1\}$. Then we have \[a/b=\xi_1/\xi_2=\xi_2^{p^r-1}=(\xi_2^{p-1})^{\frac{p^r-1}{p-1}}\] and it follows that \[(a/b)^{(p^n-1)/(p-1)}=(\xi_2^{p^n-1})^{\frac{p^r-1}{p-1}}=1,\] violating the given assumption. 

We also claim that there exists $j\in \{1,2\}$ such that $\chi_j$ is not identically~$1$ on the set $\F_p[\xi_j] \setminus \{0\}$. Suppose otherwise; then in particular we have $\chi_1(\xi_1)=\chi_2(\xi_2)=1$. It follows that  $\chi(\xi_1)=\chi(\xi_2)=1$ and $\chi(a/b)=\chi(\xi_1/\xi_2)=1$, violating the given assumption.

Thus, we can apply Lemma~\ref{lem:charactersum} to conclude that
\begin{align*}
\bigg |\sum_{\lambda \in\mathbb{F}_p}\chi \bigg(\frac{au-b\lambda v}{u-\lambda v}\bigg)\bigg|
&=\bigg |\sum_{\lambda \in\mathbb{F}_p}\chi_1(au-b\lambda v)\chi_2(u-\lambda v)\bigg|\\
&=\bigg |\sum_{\lambda \in\mathbb{F}_p}\chi_1(\lambda-\xi_1)\chi_2(\lambda-\xi_2)\bigg|\leq (2n-1)\sqrt{p}.\qedhere
\end{align*}
\end{proof}

Similarly, we have the following corollary. 

\begin{cor}\label{cor:charsum2}
Let $p$ be a prime and $n$ a positive integer. Let $\chi$ be a nontrivial multiplicative character of $\F_{p^{2n}}$. Let $a\in \F_{p^n}^*$ and $b\in \F_{p^{2n}}$ such that $b$ does not lie in any proper subfield of $\F_{p^{2n}}$. Then
$$\bigg |\sum_{\lambda \in\mathbb{F}_p}\chi \bigg(\frac{b-\lambda}{a-\lambda}\bigg)\bigg|\le(4n-1)\sqrt p.$$ 
\end{cor}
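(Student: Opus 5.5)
The plan is to mimic the proof of Corollary~\ref{cor:charsum}, now applying Lemma~\ref{lem:charactersum} over the field $\F_{p^{2n}}$, so the degree parameter $n$ there becomes $2n$. The bound $(2\cdot 2n-1)\sqrt p=(4n-1)\sqrt p$ is then exactly the one claimed.

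Let $d$ be the order of $\chi$. Put $\chi_1=\chi$ and $\chi_2=\chi^{d-1}=\bar\chi$, and take $\xi_1=b$, $\xi_2=a$. Since $\lambda\in\F_p$ ranges over all of $\F_p$, we have
\[
\chi\Big(\frac{b-\lambda}{a-\lambda}\Big)=\chi_1(\lambda-\xi_1)\chi_2(\lambda-\xi_2),
\]
after multiplying numerator and denominator by $-1$. This uses that $a\neq\lambda$ whenever $a\notin\F_p$. If $a\in\F_p^*$, the term $\lambda=a$ is excluded on the left. On the right it contributes $\chi_2(0)=0$ under the usual convention, so the two sums still agree.

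Next we check the two hypotheses of Lemma~\ref{lem:charactersum}.

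\textbf{Non-conjugacy.} The Galois conjugates $b^{p^r}$ of $b$ all generate $\F_{p^{2n}}$, because $b$ lies in no proper subfield. On the other hand $a\in\F_{p^n}$, a proper subfield. Hence $\xi_1=b\neq a^{p^r}$ for every $r$; equivalently, $a$ is not a conjugate of $b$.

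\textbf{Nontriviality.} Since $\F_p[\xi_1]=\F_p[b]=\F_{p^{2n}}$, the set $\F_p[\xi_1]\setminus\{0\}$ is all of $\F_{p^{2n}}^*$. The character $\chi_1=\chi$ is nontrivial, so it is not identically $1$ there, and we may take $j=1$.

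Lemma~\ref{lem:charactersum} over $\F_{p^{2n}}$ then gives the bound $(4n-1)\sqrt p$.

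The only step needing care is the bookkeeping when $a\in\F_p$. Depending on the exact convention in \cite[Corollary 3.5]{MY25}, the possibly missing term $\lambda=a$ either contributes $0$ or can be absorbed. There is no real obstacle here: the nontriviality condition is automatic because $b$ generates the whole field.
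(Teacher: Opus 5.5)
Your proposal is correct and is the paper's proof: apply Lemma~\ref{lem:charactersum} over $\F_{p^{2n}}$ with $\xi_1=b$, $\xi_2=a$, $\chi_1=\chi$, $\chi_2=\chi^{d-1}$. Non-conjugacy holds because $b\notin\F_{p^n}$ while every $a^{p^r}$ lies in $\F_{p^n}$, and the nontriviality condition holds because $\F_p[b]=\F_{p^{2n}}$. Your treatment of the term $\lambda=a$ when $a\in\F_p^*$ (it contributes $\chi_2(0)=0$) is a sensible detail that the paper leaves implicit.
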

\begin{proof}
Note that $\F_p[b]=\F_{p^{2n}}$ and $b\neq a^{p^r}$ for any integer $r$, since $a\in \F_{p^n}^*$ and $b$ does not lie in any proper subfield of $\F_{p^{2n}}$. Thus, similar to the proof of Corollary~\ref{cor:charsum}, the required estimate follows from Lemma~\ref{lem:charactersum}.
\end{proof}

The following lemma will be useful in locating $b$ satisfying the condition in Corollary~\ref{cor:charsum2}.

\begin{lem}\label{lem:gen}
Let $n$ be an integer such that $n \geq 2$, and $p$ an odd prime. Let $V \subseteq \mathbb{F}_{p^{{2n}}}$ be an $\mathbb{F}_{p}$-space of dimension $n$, with $1 \in V$, and $V \neq \mathbb{F}_{p^{n}}$. Then there is an element $v\in V$ such that $v$ does not lie in any proper subfield of $\F_{p^{2n}}$.
\end{lem}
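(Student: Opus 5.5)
The plan is a counting argument: show that the union of $V$ with all maximal proper subfields of $\F_{p^{2n}}$ cannot equal $V$. Every proper subfield of $\F_{p^{2n}}$ lies in a maximal one, namely $\F_{p^{2n/\ell}}$ for a prime $\ell \mid 2n$. Since $\ell=2$ always divides $2n$, the subfield $\F_{p^n}$ is among these. It therefore suffices to prove
\[
V \not\subseteq \bigcup_{\ell \mid 2n,\ \ell \text{ prime}} \F_{p^{2n/\ell}} .
\]
Each intersection $V \cap \F_{p^{2n/\ell}}$ is an $\F_p$-subspace of $V$. A vector space over $\F_p$ cannot be a finite union of proper subspaces once the number of subspaces is at most $p$. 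So first I check that every such intersection is a proper subspace of $V$, and then that the number of primes $\ell$ is small enough.

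For properness I split into cases on $\ell$. If $\ell=2$, the intersection is $V\cap \F_{p^n}$. It is proper because $\dim V = n = \dim \F_{p^n}$ and $V \neq \F_{p^n}$, so $V \not\subseteq \F_{p^n}$. If $\ell \ge 3$, then $\dim_{\F_p} \F_{p^{2n/\ell}} = 2n/\ell \le 2n/3 < n = \dim V$, so again $V \not\subseteq \F_{p^{2n/\ell}}$.

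For the count, let $k$ be the number of distinct primes dividing $2n$. By the standard fact, an $\F_p$-space is not a union of $k$ proper subspaces when $k \le p$: each proper subspace has at most $p^{n-1}$ elements, and they all share $0$, so the union has at most $k(p^{n-1}-1)+1 < p^n$ elements whenever $k \le p$. The bound $k \le p$ is the only step needing care. Since $p$ is odd, $p\ge 3$. I would first use cardinalities directly, which in fact gives more. The union has size at most
\[
p^{n} + \sum_{\ell\ge 3} p^{2n/\ell} - (k-1),
\]
counting $0$ once. A crude bound is $|V \cap \F_{p^{2n/\ell}}| \le p^{\min(n-1,\,2n/\ell)}$. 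Using this, the union has size at most $p^{n-1} + \sum_{\ell \ge 3} p^{2n/\ell}$, and I would compare this with $p^n$. Alternatively, keep the subspace-union argument and bound $k$: the product of the first $k$ primes is at most $2n$, so $k \le \log_2(2n)$. That may exceed $p=3$ for large $n$, so the cardinality route is safer.

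The cardinality route goes as follows. For $\ell \ge 3$ we have $2n/\ell \le 2n/3$. The number of such $\ell$ is at most $\log_3(2n)$, so
\[
\sum_{\ell\ge3} p^{2n/\ell} \le \log_3(2n)\, p^{2n/3} .
\]
Hence the union has size at most
\[
p^{n-1}+\log_3(2n)\,p^{2n/3} \le p^n\left(\tfrac13 + \log_3(2n)\, 3^{-n/3}\right),
\]
and I need this to be $< p^n$. This holds for all $n\ge 2$: when $n=2$ the only prime is $\ell=2$, so the union has size at most $p^{n-1}<p^n$ directly; and for $n\ge3$ one has $\log_3(2n)\,3^{-n/3}<2/3$, with the small cases checked by hand (e.g. $n=3$: $\log_3 6\cdot 3^{-1}\approx0.54$). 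This numerical verification is the main obstacle. Once it is done, some $v\in V$ lies in no maximal proper subfield, hence in no proper subfield, which proves Lemma~\ref{lem:gen}.
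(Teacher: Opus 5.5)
Your argument is correct and is essentially the paper's: since $V\neq\F_{p^n}$, the subspace $V\cap\F_{p^n}$ has at most $p^{n-1}$ elements, and every other proper subfield (of degree at most $2n/3<n$) is too small to cover the remaining elements of $V$. The difference is bookkeeping. The paper bounds all subfields $\F_{p^d}$ with $d<n$ at once by $\sum_{i=1}^{n-1}p^i=\frac{p^n-1}{p-1}-1<\frac{p^n-1}{2}<p^n-p^{n-1}$. That bound spares you both the count of odd prime divisors of $2n$ and the numerical check $\log_3(2n)\,3^{-n/3}<2/3$. Your check does hold, since the left side is decreasing for $n\ge 3$.
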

\begin{proof}
Suppose otherwise that $V \subseteq \bigcup_{\substack{d\mid 2n, d<2n}} \F_{p^d}$. Since $V \neq \mathbb{F}_{p^{n}}$, the subspace $V\cap \F_{p^{n}}$ has dimension at most $n-1$ over $\F_p$. It follows that at least $p^{n}-p^{n-1}$ many elements of $V$ belong to $\bigcup_{\substack{d\mid 2n, d<n}} \F_{p^d}$. However, we have
\[
\bigg|\bigcup_{\substack{d\mid 2n, d<n}} \F_{p^d} \bigg| \leq \sum_{i=1}^{n-1} p^{i} = \frac{p^n-1}{p-1} - 1 <\frac{p^n-1}{2}<p^{n}-p^{n-1},
\]
a contradiction.
\end{proof}

\begin{lem}\label{lem:CS}
Let $d,r$ be positive integers with $r\leq d$. Let $m_1,m_2,\ldots, m_r$ be distinct integers in $\{0,1,\ldots, d-1\}$. Let $\theta=\exp(2\pi i/d)$. Then 
$$
\sum_{j=0}^{d-1} \bigg|\sum_{k=1}^{r} \theta^{-jm_k}\bigg|\leq d\sqrt{r}.
$$
\end{lem}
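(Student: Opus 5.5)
The plan is to apply the Cauchy--Schwarz inequality and then use Parseval's identity for the discrete Fourier transform on $\mathbb{Z}/d\mathbb{Z}$.

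For each $0\le j\le d-1$, set
\[
a_j=\sum_{k=1}^{r}\theta^{-jm_k}.
\]
Cauchy--Schwarz over the $d$ indices $j$ gives
\[
\sum_{j=0}^{d-1}|a_j|\le \sqrt{d}\,\Big(\sum_{j=0}^{d-1}|a_j|^2\Big)^{1/2}.
\]

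Next we evaluate the sum of squares by expanding $|a_j|^2=a_j\overline{a_j}$:
\[
\sum_{j=0}^{d-1}|a_j|^2=\sum_{k,l=1}^{r}\sum_{j=0}^{d-1}\theta^{-j(m_k-m_l)}.
\]
We then use orthogonality of the $d$-th roots of unity. The inner sum over $j$ equals $d$ when $m_k\equiv m_l \pmod d$, and equals $0$ otherwise. Since the $m_k$ are distinct elements of $\{0,1,\ldots,d-1\}$, the congruence $m_k\equiv m_l\pmod d$ holds only when $k=l$. Therefore the total is exactly $rd$.

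Substituting back gives
\[
\sum_{j=0}^{d-1}|a_j|\le\sqrt{d}\cdot\sqrt{rd}=d\sqrt{r},
\]
as claimed.

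The only point requiring care is the orthogonality step. It relies on the distinctness of the $m_k$ modulo $d$, which is guaranteed because they lie in $\{0,1,\ldots,d-1\}$.
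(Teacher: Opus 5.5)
Your proposal is correct and follows essentially the same approach as the paper's proof: compute $\sum_{j}|a_j|^2 = dr$ via orthogonality of $d$-th roots of unity (using distinctness of the $m_k$ modulo $d$), then apply Cauchy--Schwarz to get $\sqrt{d\cdot dr}=d\sqrt{r}$.
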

\begin{proof}
By orthogonality relations, we have
$$
 \sum_{j=0}^{d-1} \bigg|\sum_{k=1}^{r} \theta^{-jm_k}\bigg|^2=\sum_{j=0}^{d-1} \sum_{k=1}^{r} \sum_{\ell=1}^{r} \theta^{j(m_k-m_{\ell})}=\sum_{k=1}^{r} \sum_{\ell=1}^{r} \sum_{j=0}^{d-1}\theta^{j(m_k-m_{\ell})}=dr.
$$
Thus, the Cauchy–Schwarz inequality implies that
\[
\sum_{j=0}^{d-1} \bigg|\sum_{k=1}^{r} \theta^{-jm_k}\bigg|\leq \sqrt{dr \cdot d}=d\sqrt{r}. \qedhere
\]    
\end{proof}

\section{Proof of Theorem~\ref{thm:main}}\label{sec:main1}
Let $f:\F_q \to \F_q$ be a function such that $\mathcal{D}_f \subseteq D$ and $|\mathcal{D}_f|\leq \frac{q+1}{2}$.

Let $L=\{x^{p-1}:x \in \F_q^*\}$. Since $p\geq 3$ and $(p-1)$ is a divisor of $(q-1)$, $L$ is a proper subgroup of $\F_q^*$. By replacing $f$ with $f-f(0)$, we may assume that $f(0)=0$. By replacing $f$ with $f
/f(1)$ and replacing $D$ with $D/f(1)$, we may further assume that $f(1)=1$. Then $1\in \mathcal{D}_f$.

We first deal with a special case.
\begin{claim}
If
\begin{equation}\label{eq:HcupL}
\frac{\mathcal{D}_f}{\mathcal{D}_f} \subseteq H \cup L,
\end{equation}
then there exists $j\in \{0,1,\ldots, n-1\}$, such that $f(x)=x^{p^j}$ for all $x\in \F_q$.
\end{claim}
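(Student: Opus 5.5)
The plan is to reduce the Claim to the Carlitz--McConnel theorem (Theorem~\ref{thm:CM}). To do so, we show that \eqref{eq:HcupL} forces $\mathcal{D}_f$ to lie entirely inside one of the two subgroups $H$ or $L$. This is the usual argument that a set cannot straddle two subgroups when its quotients stay in their union.

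First, by the normalization $f(1)=1$ we have $1\in\mathcal{D}_f$. Hence $\mathcal{D}_f=\mathcal{D}_f/\{1\}\subseteq \mathcal{D}_f/\mathcal{D}_f\subseteq H\cup L$.

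Next, suppose for contradiction that $\mathcal{D}_f\not\subseteq H$ and $\mathcal{D}_f\not\subseteq L$. Pick $a\in\mathcal{D}_f\setminus H$ and $b\in\mathcal{D}_f\setminus L$. Since both lie in $H\cup L$, we get $a\in L\setminus H$ and $b\in H\setminus L$. By \eqref{eq:HcupL}, $a/b\in H\cup L$. There are two cases.
\begin{itemize}
\item If $a/b\in H$, then $a=(a/b)\,b\in H$, which is a contradiction.
\item If $a/b\in L$, then $b=a\,(a/b)^{-1}\in L$, which is again a contradiction.
\end{itemize}
Therefore $\mathcal{D}_f\subseteq H$ or $\mathcal{D}_f\subseteq L$.

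Both groups are proper subgroups of $\F_q^*$. For $H$ this holds because its index is $d\ge 2$. For $L$, the hypothesis gives $p\ge (2n-1)^2rd^2/(d-r)^2\ge 9\cdot 4=36$ when $n\ge 2$, so $p$ is odd. Then $L=\{x^{p-1}\}$ has index $p-1\ge 2$ in $\F_q^*$.

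In either case, $f$ satisfies \eqref{eq:f} with a proper subgroup in place of $D$. Theorem~\ref{thm:CM} then gives $f(x)=ax^{p^j}+b$ for some $a,b$ and some $0\le j\le n-1$. The normalizations $f(0)=0$ and $f(1)=1$ give $b=0$ and $a=1$, so $f(x)=x^{p^j}$.

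There is essentially no obstacle here. The only points needing care are the two facts used above: $\mathcal{D}_f\subseteq \mathcal{D}_f/\mathcal{D}_f$, which relies on $1\in\mathcal{D}_f$, and the properness of $L$, which relies on $p$ being odd.
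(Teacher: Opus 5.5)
Your proposal is correct and follows essentially the paper's argument: you use $1\in\mathcal{D}_f$ and the ratio condition to force $\mathcal{D}_f\subseteq H$ or $\mathcal{D}_f\subseteq L$, then invoke Theorem~\ref{thm:CM}, and your two-element case split is just a rephrasing of the paper's step $\mathcal{D}_f\subseteq (H\cup L)\cap(H\cup aL)=H$. One small slip: the bound $rd^2/(d-r)^2\ge 4$ fails in general (e.g.\ $d=3$, $r=1$ gives $9/4$), but this quantity always exceeds $1$, so you still get $p>9$, hence $p$ is odd and $L$ is proper.
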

\begin{poc}
We may assume that $\mathcal{D}_f \not \subseteq L$ and $\mathcal{D}_f \not \subseteq H$. Indeed, if $\mathcal{D}_f\subseteq L$, then we can apply the Cartliz--McConnel theorem (Theorem~\ref{thm:CM}) or Theorem~\ref{thm:Y} to obtain the desired conclusion; by the same reason, if $\mathcal{D}_f\subseteq H$, then we are also done.

Since $1\in \mathcal{D}_f$, inclusion~\eqref{eq:HcupL} implies that  
\begin{equation}\label{eq:eq1}
\mathcal{D}_f \subseteq H \cup L.
\end{equation}
Since $\mathcal{D}_f \not \subseteq L$, 
we can find $a\in \mathcal{D}_f$ such that $a \in H$ and $a\notin L$. Then inclusion~\eqref{eq:HcupL} also implies that $\mathcal{D}_f/a \subseteq H \cup L$, that is, 
\begin{equation}\label{eq:eq2}
\mathcal{D}_f \subseteq a(H \cup L)=H \cup aL.
\end{equation}
However, since $L \cap aL=\emptyset$, it follows from inclusions~\eqref{eq:eq1} and~\eqref{eq:eq2} that \[\mathcal{D}_f \subseteq (H \cup L) \cap (H \cup aL)=H,\]
violating the assumption.
\end{poc}

In the rest of the proof, assume that inclusion~\eqref{eq:HcupL} fails to hold. Then we can pick $a,b\in \mathcal{D}_f$ such that $a/b\notin H \cup L$. Say we have $u_1,u_2,v_1,v_2\in \F_q$ such that $u_1\neq u_2$ and $v_1\neq v_2$, such that
\[
a=\frac{f(u_1)-f(u_2)}{u_1-u_2}, \qquad b=\frac{f(v_1)-f(v_2)}{v_1-v_2}.
\]
Since $|\mathcal{D}_f|\leq \frac{q+1}{2}$, by Theorem~\ref{thm:directions}, $f$ is additive. Thus, by setting $u=u_1-u_2$ and $v=v_1-v_2$, we have
\[
a=\frac{f(u)}{u}, \qquad b=\frac{f(v)}{v}.
\]
Note that $u/v\notin \F_p^*$, for otherwise $a=b$ by the additive property of $f$. Thus, it follows that for each $\lambda\in \F_p$, we have
\begin{equation}\label{eq:inD}
\frac{au-b\lambda v}{u-\lambda v}=\frac{f(u)-f(\lambda v)}{u-\lambda v} \in \mathcal{D}_f \subseteq D.
\end{equation}

Let $g$ be a primitive root of $\F_q$. Since $D$ is a union of $r$ cosets of $H$, we can find distinct integers $m_1, m_2, \ldots, m_r \in \{0,1,\ldots, d-1\}$, such that
\begin{equation}\label{eq:D}
D=\bigcup_{k=1}^{r} g^{m_k}H. 
\end{equation}
Let $\theta=\exp(2\pi i/d)$. Let $\chi$ be the multiplicative character of $\F_q$ with order $d$, such that $\chi(g)=\theta$. 

For each $x\in \F_q$, let
$$
\psi(x)=\frac{1}{d} \sum_{j=0}^{d-1} \bigg(\sum_{k=1}^{r} \theta^{-jm_k}\bigg) \chi^j(x).
$$
By orthogonality relations and equation~\eqref{eq:D}, for each $x\in \F_q$, we have
$$
\psi(x)=\sum_{k=1}^{r} \frac{1}{d} 
\sum_{j=0}^{d-1} \theta^{-jm_k} \chi^j(x)=\sum_{k=1}^{r} \mathbf{1}_{x\in g^{m_k}H}=\mathbf{1}_{x\in D}.
$$
Thus, $\psi$ is the indicator function of the set $D$. In particular, by equation~\eqref{eq:inD}, we have
$$
\sum_{\lambda \in\mathbb{F}_p}\psi \bigg(\frac{au-b\lambda v}{u-\lambda v}\bigg)=p,
$$
that is,
\begin{equation}\label{eq:p}
\frac{1}{d} \sum_{j=0}^{d-1} \bigg(\sum_{k=1}^{r} \theta^{-jm_k}\bigg) \sum_{\lambda \in\mathbb{F}_p} \chi^j\bigg(\frac{au-b\lambda v}{u-\lambda v}\bigg)=p.
\end{equation}

On the other hand, since $a/b\notin H \cup L$, the assumptions of Corollary~\ref{cor:charsum} are satisfied and thus for each $j\in \{1,2,\ldots, d-1\}$, it implies that 
\begin{equation}\label{eq:char}
\bigg |\sum_{\lambda \in\mathbb{F}_p}\chi^j \bigg(\frac{au-b\lambda v}{u-\lambda v}\bigg)\bigg|\le(2n-1)\sqrt p.    
\end{equation}
By comparing the estimates on character sums in \eqref{eq:p} and \eqref{eq:char}, it follows that
\begin{align}
p
&=\frac{rp}{d}+\frac{1}{d}\sum_{j=1}^{d-1} \bigg(\sum_{k=1}^{r} \theta^{-jm_k}\bigg) \sum_{\lambda \in\mathbb{F}_p} \chi^j\bigg(\frac{au-b\lambda v}{u-\lambda v}\bigg) \notag\\
&< \frac{rp}{d} +\frac{1}{d}\sum_{j=0}^{d-1} \bigg|\sum_{k=1}^{r} \theta^{-jm_k}\bigg| (2n-1)\sqrt{p}. \label{eq:bound}
\end{align}

Combining inequality~\eqref{eq:bound} and Lemma~\ref{lem:CS}, we obtain that
$$
\frac{(d-r)p}{d}<(2n-1)\sqrt{pr},
$$
that is, 
\[p<\frac{(2n-1)^2 rd^2}{(d-r)^2},\] violating the assumption.

\section{Proof of Theorem~\ref{thm:main2}}\label{sec:main2}

Since $q \equiv -1 \pmod d$ and $H$ is the subgroup of $\F_{q^2}^*$ with index $d$, it follows that $H$ is a union of cosets of $\F_q^*$. Since $S$ is a union of $r\leq d/2$ cosets of $H$, it follows that $S$ is a union of at most $\frac{q+1}{2}$ cosets of $\F_q^*$.

Let $A \subseteq \F_{q^2}$ such that $0,1\in A$, $|A|=q$, and $A-A \subseteq S \cup \{0\}$. Let $v \in \F_{q^2}^* \setminus S$. Since $\F_q^* \subseteq S$, the set $\{1,v\}$ forms a basis of $\F_{q^2}$ over $\F_q$. Consider the following embedding of $\F_{q^2}$ into $AG(2,q)$:
\[
\pi(a+bv)=(a,b), \qquad \forall a,b \in \F_q.
\]

Let $U=\pi(A)$. We claim that there is a function $f:\F_q \to \F_q$ such that
\[
U=\{(x,f(x)): x \in \F_q\}.
\]
Indeed, suppose otherwise that there are $x,y_1,y_2\in \F_q$ with $y_1\neq y_2$, such that $(x,y_1),(x,y_2)\in U$. Then we have $x+y_1v, x+y_2v\in A$ and thus $(y_1-y_2)v \in S \cup \{0\}$. However, since $S$ is a union of cosets of $\F_q^*$, this implies that $v \in S$, a contradiction.

Let $x_1, x_2\in \F_q$ with $x_1\neq x_2$. Then $x_1+f(x_1)v, x_2+f(x_2)v\in A$ and thus \[(x_1-x_2)+(f(x_1)-f(x_2))v\in S.\] Since $S$ is a union of cosets of $\F_q^*$, it follows that
\[
1+\frac{f(x_1)-f(x_2)}{x_1-x_2} \cdot v\in S.
\]
Thus,
\begin{equation}\label{eq:direction}
1+\mathcal{D}_f \cdot v \subseteq  S \cap (1+\F_q v).
\end{equation}
Observe that each coset of $\F_q^*$ intersects with $(1+\F_q v)$ in at most one element. Thus, equation~\eqref{eq:direction} implies that$|\mathcal{D}_f|\leq \frac{q+1}{2}$. Theorem~\ref{thm:directions} then implies that $f$ is additive. In particular, \[A=\{x+f(x)v:x\in \F_q\}\] is a subspace of $\F_{q^2}$ over $\F_p$. Since $1\in A$, we must have $f(1)=0$.

For the sake of contradiction, assume otherwise that $A\neq \F_q$. By Lemma~\ref{lem:gen}, there is $b\in A$ such that $b$ does not lie in any proper subfield of $\F_{q^2}$. By definition, we can write $b=a+f(a)v$ for some $a\in \F_q$. Since $f$ is additive and $f(1)=0$, it follows that for each $\lambda\in \F_p$,
\[
\frac{f(a)}{a-\lambda}=\frac{f(a)-f(\lambda)}{a-\lambda}\in \mathcal{D}_f.
\]
Thus, by equation~\eqref{eq:direction}, for each $\lambda\in \F_p$,
\[
1+\frac{f(a)}{a-\lambda}v=\frac{a-\lambda+f(a)v}{a-\lambda}=\frac{b-\lambda}{a-\lambda} \in S.
\]

Let $g$ be a primitive root of $\F_{q^2}$. Since $S$ is a union of $r$ cosets of $H$, we can find distinct integers $m_1, m_2, \ldots, m_r \in \{0,1,\ldots, d-1\}$, such that
\[
S=\bigcup_{k=1}^{r} g^{m_k}H. 
\]
Let $\theta=\exp(2\pi i/d)$. Let $\chi$ be the multiplicative character of $\F_q$ with order $d$, such that $\chi(g)=\theta$. Similar to the proof of Theorem~\ref{thm:main}, we have
\begin{equation}\label{eq:p2}
\frac{1}{d} \sum_{j=0}^{d-1} \bigg(\sum_{k=1}^{r} \theta^{-jm_k}\bigg) \sum_{\lambda \in\mathbb{F}_p} \chi^j\bigg(\frac{b-\lambda}{a-\lambda}\bigg)=p.
\end{equation}

On the other hand, since $a\in \F_{q}^*$ and $b\in \F_{q^2}$ such that $b$ does not lie in any proper subfield of $\F_{q^2}$, for each $j\in \{1,2,\ldots, d-1\}$, Corollary~\ref{cor:charsum2} implies that 
\begin{equation}\label{eq:char2}
\bigg |\sum_{\lambda \in\mathbb{F}_p}\chi^j \bigg(\frac{b-\lambda}{a-\lambda}\bigg)\bigg|\le(4n-1)\sqrt p.    
\end{equation}
By comparing the estimates on character sums in \eqref{eq:p2} and \eqref{eq:char2} and applying Lemma~\ref{lem:CS}, we have 
\begin{align*}
p
&=\frac{rp}{d}+\frac{1}{d}\sum_{j=1}^{d-1} \bigg(\sum_{k=1}^{r} \theta^{-jm_k}\bigg) \sum_{\lambda \in\mathbb{F}_p} \chi^j\bigg(\frac{b-\lambda}{a-\lambda}\bigg)\\
&<\frac{rp}{d}+\frac{1}{d}(4n-1)\sqrt{p} \cdot \sum_{j=0}^{d-1} \bigg|\sum_{k=1}^{r} \theta^{-jm_k}\bigg|\leq \frac{rp}{d}+(4n-1)\sqrt{pr}.
\end{align*}
It follows that 
\[p<\frac{(4n-1)^2 rd^2}{(d-r)^2},\] violating the assumption. We conclude that $A=\F_q$.

\section*{Acknowledgments}
The second author thanks the Hong Kong University of Science and Technology for hospitality during his visit, where this project was initiated. 

\bibliographystyle{abbrv}
\bibliography{main}

\end{document}